\documentclass{article}
\usepackage[margin = 1.5in]{geometry}

\usepackage{graphicx} 
\usepackage{amsthm, thmtools}
\usepackage{commath, mathtools, amsfonts, amsmath, amssymb}
\usepackage{tikz-cd}
\usepackage{thmtools}
\usepackage[algoruled]{algorithm2e}

\usepackage{authblk}
\usepackage{fontawesome5}

\usepackage[
    backend=biber,
    style=numeric,
    maxnames=19
]{biblatex}

\usepackage{hyperref}
\hypersetup{
    colorlinks=true,
    linkcolor=blue,
    filecolor=blue,      
    urlcolor=blue,
    citecolor=blue, 
    pdftitle={Base change lifts},
    pdfpagemode=FullScreen,
    }

\theoremstyle{definition}
\newtheorem{theorem}{Theorem}[section]

\newtheorem{remark}{Remark}[section]
\newtheorem{lemma}{Lemma}[section]

\declaretheoremstyle[
    spaceabove=1em, spacebelow=1em,
    headfont=\normalfont\bfseries,
    notefont=\mdseries, notebraces={(}{)},
    bodyfont=\normalfont,
    postheadspace=0.5em,
    qed=$\blacklozenge$
]{examplestyle}

\declaretheorem[numberwithin = section, style = examplestyle]{example}

\declaretheorem[numberwithin = section, style = examplestyle]{definition}

\newcommand*{\C}{\mathbb{C}}
\newcommand*{\Q}{\mathbb{Q}}
\newcommand*{\R}{\mathbb{R}}

\newcommand*{\Z}{\mathbb{Z}}
\newcommand*{\F}{\mathbb{F}}
\newcommand*{\A}{\mathbb{A}}

\newcommand{\GL}{\operatorname{GL}}

\newcommand{\Fr}{\operatorname{Fr}}

\renewcommand*{\O}{\mathcal{O}}

\newcommand*{\Gal}{\operatorname{Gal}}
\newcommand{\Tr}{{\operatorname{Tr}}}
\newcommand{\Norm}{\operatorname{N}}

\newcommand{\Spec}{\operatorname{Spec}}
\newcommand{\cl}{\operatorname{cl}}

\newcommand{\fk}[1]{\mathfrak{#1}}
\newcommand{\rhoreg}{\rho_{\operatorname{reg}}}

\title{On the computation of base-change lifts and lifts of Hida families}
\author[1]{Iván Blanco-Chacón}
\author[2]{Luis Dieulefait}
\author[3]{Antti Haavikko}

\affil[1,3]{Universidad de Alcal\'a, Alcal\'a de Henares, Spain}
\affil[2]{Universitat de Barcelona, Barcelona, Spain}

\begin{document}
\maketitle

\begin{abstract}
We derive an explicit formula for the Hecke eigenvalues of a Hilbert modular form which is a base-change lift of a classical newform to a totally real number field. We show that for a totally real Galois number field $F$ the $L$-function of a base-change lifted form can be factorized as a product of twists of the $L$-function of the underlying classical form over irreducible representations of $\Gal(F / \Q)$. Moreover, we use the formula for the Hecke eigenvalues of a base-change lift to prove the existence of a base-change lift of a Hida family. In particular, we show that a Hida family of classical Hecke eigenforms can be lifted to a formal power series that specializes to the base-change lifts of the Hida family of classical cusp forms.
\end{abstract}

\textbf{Keywords:} Langlands base change, modular forms, Hilbert modular forms, Hida families, Galois representations

\vspace{2mm}
\hrule
\vspace{2mm}
\textit{\faEnvelope[regular] Corresponding author: $^3$antti.haavikko@edu.uah.es}

\section{Introduction}
Given a number field $F$, automorphic forms for $\GL_n / F$ generalize classical modular forms ($n=2$ and $F=\Q$), Hilbert modular forms ($n=2$ and $F$ a totally real number field), and other classical objects like Bianchi modular forms and Shimura automorphic forms. As in the case of classical and Hilbert modular forms, automorphic forms yield automorphic representations $\pi: \GL_n(\A_F) \to \GL(V^{\pi})$, where $\A_F$ is the adele ring of $F$ and $V^{\pi}$ is a certain Hilbert space attached to $\pi$. For a precise definition of automorphic representations and a detailed overview of their properties, we refer the reader to \cite[pp.~165-170]{cornelletal} and \cite{getz2019introduction}.

A Galois representation $\rho: G_F \to \GL_n(R)$, where $R$ is a coefficient ring, is hence called \emph{automorphic} if there exists an automorphic representation $\pi:\GL_n(\A_F)\to \GL(V^{\pi})$ such that for every place $\nu$ of $F$ where $\rho$ is unramified, one has that $\rho(\Fr_{\nu}) = t_{\pi_{\nu}}$, where $\Fr_{\nu}$ is a Frobenius element at $\nu$, $\pi_{\nu}$ is the local automorphic representation $\pi|_{F_{\nu}}$ and $t_{\pi_{\nu}}$ its Langlands class.

One of the many facets of the Langlands functoriality conjecture (see \cite[p.~185]{cornelletal}) is the base-change problem (see \cite[p.~192]{cornelletal}). In its essence, the base-change problem is the following: given an automorphic Galois representation $\rho: G_F \to \GL_n(R)$ and an extension $L / F$, is it true that the restriction $\rho|_{G_L}$ is also automorphic for $\GL_n / L$?

For general extensions of number fields $L / F$, the problem remains open but some major contributions have been established for particular types of extensions. For instance, the base-change problem was solved by Langlands \cite{langlands} when $\Gal(L/F)$ is abelian, and by Arthur and Clozel \cite{arthurclozel} when $\Gal(L/F)$ is solvable. Later on, Dieulefait \cite{dieulefait1} solved the base-change problem for $\mathrm{GL}_2$ with $F=\Q$ and $L$ a totally real Galois number field, removing the condition of $L/\Q$ being solvable. In \cite{dieulefait2}, a simplified proof is given, together with a proof of the automorphy of $\mathrm{Symm}^5(\pi_f)$ for a modular cusp form $f$, where $\pi_f$ stands for its attached automorphic representation. They also manage to lift the assumption on the extension $L / \Q$ being Galois. See also \cite{blancodieulefait} for a corrigenda of \cite{dieulefait2}. 

These aforementioned works are highly non-constructive in nature. They show the existence of the automorphic lifted representation, which is determined uniquely by local properties of the automorphic representation of the base field, but they do not describe the geometric object they are attached to. For fields that are not totally real nor CM, it is not even clear which geometric objects these automorphic representations are attached to, if any. By describing the geometric object we mean giving explicitly the Hecke eigenvalues uniquely attached to the lifted automorphic forms and showing how these can be explicitly obtained from those of the geometric object attached to the automorphic representation of the base field.

The base-change problem was first addressed from a more explicit and computational perspective by Doi and Naganuma in \cite{doi1969functional}, earlier than \cite{langlands}, \cite{arthurclozel} and \cite{dieulefait1}. Doi and Naganuma started with a classical Hecke eigenform $f$ and a quadratic real number field $F = \Q(\sqrt{D})$ under some conditions on the nebentypus character and the field $F$. They fully described the Hilbert modular form $h$ associated to the lift to $F$ of the Deligne representation associated to $f$. In particular, they gave the Hecke eigenvalues of $h$ in terms of those of $f$ via a numerical recipe which allows one to compute the eigenvalues of $h$ knowing only those of $f$. Later in \cite{saito}, Saito generalized Doi and Naganuma's result to totally real number fields $F$ when $\Gal(F/\Q)$ is cyclic of prime order and $F$ has class number 1.

The first goal of the this work is to generalize the explicit approach by Doi, Naganuma, and Saito to any totally real number field and derive a formula for the Hecke eigenvalues of the base-change lift $h$ in terms of the Hecke eigenvalues of the classical form $f$. Our second goal is to give a base-change result for Hida families. We will use the explicit base-change formulas derived for classical newforms to construct the base change of a Hida family of ordinary newforms. This base change will be a Hida family of ordinary Hilbert modular forms. 

In Section \ref{sec: base-change lifts of classical modular forms}, we recall the concept of base change from classical newforms to Hilbert modular forms for totally real neumber fields as well as the precise statement of the existence results in \cite{dieulefait1} and \cite{dieulefait2}. Section \ref{sec: Hecke eigenvalues of base-change lifts} generalizes the results of Doi, Naganuma, and Saito and derives a formula to compute the Hecke eigenvalues of a base-change lift to any totally real Galois number field. Note that the existence of the lift is due to \cite{dieulefait1} and \cite{dieulefait2}.

Section \ref{sec: Factorization of L-series of base-change forms} proves that the $L$-function of a base-change lift $h$ of $f$ to a totally real Galois number field $F$ has a factorization
\begin{align*}
    L(h,s) = \prod_{\rho} L(f \otimes \rho, s)^{\dim(\rho)},
\end{align*}
where $\rho$ are the irreducible representations of $\Gal(F / \Q)$. Specifically, we show that this factorization holds for all but finitely many local Euler factors at primes $\fk{p} \subset \O_F$ over primes $p$ not dividing the level of $f$.

In Section \ref{sec: base-change lifts of Hida families}, we recall the concepts of Hida families of ordinary classical cusp forms and Hilbert cusp forms and the main theorems which grant a Hida family passing by the input datum at its weight. Finally, we define the base-change lift of a Hida family of classical Hecke eigenforms and prove the existence of these lifts by an explicit construction using the formulas obtained in Section \ref{sec: Hecke eigenvalues of base-change lifts}. The formulas must be applied to the Iwasawa coefficients of the formal power series to show that the lifted formal power series specialize to Hilbert modular forms which are precisely the base-change lifts of the specializations of the Hida family of classical cusp forms. 

As an application of the base-change lift of Hida families, we generalize one of the two main results in \cite{blancodieulefait}. We prove the following result for ordinary classical cusp forms.
\begin{theorem}
Let $f\in S_k(N,\chi)$ be a $p$-ordinary cuspidal Hecke eigenform, $\mathcal{O}_{K_f}$ the ring of integers of its field of definition and $\fk{p}$ a prime of $\mathcal{O}_{K_f}$ above $p \nmid N$. Then, for each totally real number field $F$, there exists a sequence $\{k_r\}_{r\geq 1} \subseteq \mathbb{N}$ such that $\overline{\rho}_{f,\fk{p}}|_{G_F}$ admits a potentially diagonalizable automorphic lift of Hodge--Tate weights $\{0,k_r-1\}$.    
\end{theorem}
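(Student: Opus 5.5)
The plan is to place $f$ in a Hida family, pass to the members of that family with large weight, and base-change those members to $F$ using the results recalled in Section \ref{sec: base-change lifts of classical modular forms}.

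\textbf{Step 1: the Hida family.} Since $f$ is $p$-ordinary and $p \nmid N$, Hida's theory (recalled in Section \ref{sec: base-change lifts of Hida families}) gives a Hida family $\mathcal{F}$ through the $p$-stabilization of $f$. Its arithmetic specializations $f_{k}$, in weights $k \equiv k_0 \pmod{p-1}$ for an appropriate base weight $k_0$, are classical ordinary eigenforms of level dividing $Np$. Because the family is parametrized over a local domain, all classical specializations have the same residual representation, so $\overline{\rho}_{f_k,\fk{P}} \cong \overline{\rho}_{f,\fk{p}}$ for a suitable prime $\fk{P}$ above $\fk{p}$, after enlarging coefficients. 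We take for $\{k_r\}_{r \ge 1}$ an infinite increasing sequence of such weights with $k_r > 2$. Then each $f_{k_r}$ is old at $p$, i.e. it comes from a $p$-ordinary newform of level prime to $p$.

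\textbf{Step 2: base change.} For a totally real field $F$, Dieulefait's theorem (\cite{dieulefait1}, \cite{dieulefait2}; the Galois hypothesis can be removed as in \cite{dieulefait2}) shows that $\rho_{f_{k_r},\fk{P}}|_{G_F}$ is automorphic. When $F$ is Galois, Section \ref{sec: base-change lifts of Hida families} packages the same conclusion as the base change of the whole family, via the formulas of Section \ref{sec: Hecke eigenvalues of base-change lifts}. The specializations are then the Hilbert forms $h_{k_r}$ whose Galois representations are $\rho_{f_{k_r},\fk{P}}|_{G_F}$. These are lifts of $\overline{\rho}_{f,\fk{p}}|_{G_F}$, and since $f_{k_r}$ has weight $k_r$ they have Hodge--Tate weights $\{0,k_r-1\}$ at every place above $p$.

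\textbf{Step 3: potential diagonalizability (main obstacle).} We must show that $\rho_{f_{k_r},\fk{P}}|_{G_{F_v}}$ is potentially diagonalizable for each $v \mid p$. Because $f_{k_r}$ is $p$-ordinary and $p \nmid N$, the local representation at $p$ is crystalline and ordinary: it is an extension of an unramified character by an unramified twist of $\varepsilon^{k_r-1}$. By the result of Barnet-Lamb--Gee--Geraghty--Taylor, ordinary crystalline representations are potentially diagonalizable. This property is preserved under restriction to $G_{F_v}$, since the restriction is still crystalline and ordinary. This is where ordinarity is essential.

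The remaining care is in Step 1: the residual representation must really be constant along the family, and we must choose the $k_r$ so that the specializations have no $p$-new part. Both follow by taking $k_r > 2$ in a fixed residue class modulo $p-1$. If one prefers to avoid the non-Galois case, one can instead pass to the Galois closure of $F$ and restrict. Potential diagonalizability and automorphy then descend by solvable base change, provided the Galois closure is handled appropriately. The simpler route is to cite \cite{dieulefait2} directly.
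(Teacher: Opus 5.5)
Your proof is correct. Its skeleton matches the paper's: a Hida family through $f$, an infinite sequence of ordinary specializations $f_{k_r}$ with the same residual representation, automorphy over $F$, and then \cite[Lemma 1.4.3]{blggt} for ordinary crystalline representations. The difference is in how you pass to $F$.

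The paper base-changes the whole Hida family to $\mathbf{h}$ and proves ordinarity on the Hilbert side. From Theorem \ref{thm:hecke eigenvalues of a base-change lift} and the recursion for $a(p^n)$, it shows that each $C(\fk{p})$ of $h_{k_r}$ is a $p$-adic unit. It deduces that $h_{k_r}$ is $p$-ordinary, hence that $\rho_{h_{k_r},\fk{p}_r}$ is ordinary, and it gets crystallinity from $p \nmid N$. You instead invoke Theorem \ref{thm: existance of base-change lifts to F} for each $f_{k_r}$ separately and check ordinarity purely locally: $\rho_{f_{k_r},\fk{P}}|_{G_{\Q_p}}$ is crystalline and ordinary, and both properties survive restriction to $G_{F_v}$.

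Your route is shorter and more robust. It needs neither the Hida-family base change, nor the explicit eigenvalue formula, nor the Hilbert-side theorem that nearly ordinary forms have ordinary Galois representations. It also works when $p$ ramifies in $F$, whereas the paper carries out its ordinarity step only for $p$ unramified in $F$. (Incidentally, the recursion gives $a(\ell^n)\equiv a(\ell)^n \pmod{\ell}$ rather than $a(\ell^n)\equiv a(\ell)$; this still suffices for the unit property.) What the paper's route buys is structure: the lifts $h_{k_r}$ appear as specializations of a single nearly ordinary Hida family over $F$ passing through the base change of $f$.

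One minor caveat concerns your closing aside about passing to the Galois closure $\tilde F$ and descending. This does not work as stated, because $\Gal(\tilde F/F)$ need not be solvable, and automorphy cannot in general be descended along non-solvable extensions. Since your main argument cites \cite{dieulefait2} directly, the proof is unaffected.
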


Notice that the advantage of using Hida families is that it allows us to remove the condition $p>\max\{k,6\}$, the condition of not being a CM form and the condition on the residual representation having a large image that all appear in \cite{blancodieulefait}. 

We conclude Section \ref{sec: base-change lifts of Hida families} with the corresponding result in the non-ordinary setting:
\begin{theorem}
Let $f \in S_k(N,\chi)$ be a non-ordinary cuspidal Hecke eigenform, $\mathcal{O}_{K_f}$ the ring of integers of its field of definition and $\fk{p}$ a prime of $\mathcal{O}_{K_f}$ above $p \nmid N$. Assume that $\overline{\rho}_{f,\fk{p}}|_{G_F}$ has large image, $f$ is not a CM form, and $p>\max\{k,6\}$. Then, for each totally real number field $F$, there exists a sequence $\{k_r\}_{r\geq 1} \subseteq \mathbb{N}$ such that $\overline{\rho}_{f,\fk{p}}|_{G_F}$ admits a potentially diagonalizable automorphic lift of Hodge--Tate weights $\{0,k_r-1\}$.      
\end{theorem}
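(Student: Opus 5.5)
The plan is to reduce to the existence result already known from \cite{blancodieulefait}, applied not to $f$ but to suitable higher-weight forms congruent to $f$ modulo $\fk{p}$. The non-ordinary hypothesis rules out the Hida family, so those forms cannot come from a $p$-adic family. Instead I would use Serre-type weight shifting by powers of the Hasse invariant (equivalently, by Eisenstein series $E_{p-1}$) together with Deligne--Serre lifting.

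\textbf{Step 1: congruent forms of growing weight.} For $r \geq 1$ put $k_r = k + r(p-1)$, and consider $f E_{p-1}^{r}$, which is congruent to $f$ modulo $p$ by the $q$-expansion. By the Deligne--Serre lemma there is a normalized cuspidal Hecke eigenform $f_r \in S_{k_r}(N,\chi)$, with $p\nmid N$, whose eigenvalues are congruent to those of $f$ modulo a prime above $\fk{p}$. Hence $\overline{\rho}_{f_r,\fk{p}_r} \cong \overline{\rho}_{f,\fk{p}}$ after extending scalars.

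\textbf{Step 2: restrict to $F$ and invoke the known lifting result.} I would then apply, for each $r$, the corresponding theorem of \cite{blancodieulefait}. Its hypotheses are exactly those assumed here: large residual image of $\overline{\rho}_{f,\fk{p}}|_{G_F}$, $f$ not CM, and $p > \max\{k,6\}$. Together with the solvable and totally real base change of \cite{dieulefait1,dieulefait2}, it produces automorphic lifts of $\overline{\rho}_{f,\fk{p}}|_{G_F}$. Alternatively, I would take $\rho_{f_r,\fk{p}_r}|_{G_F}$ directly. This is automorphic by the base-change existence of Section~\ref{sec: base-change lifts of classical modular forms}, and it has Hodge--Tate weights $\{0,k_r-1\}$. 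What remains is potential diagonalizability at places above $p$.

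\textbf{Step 3: potential diagonalizability, the main obstacle.} When $k_r \le p$ the local representation at $p$ is Fontaine--Laffaille crystalline, hence potentially diagonalizable by Barnet-Lamb--Gee--Geraghty--Taylor. For $k_r > p$ this argument is no longer available, since $k_r$ grows. So I would not take $\rho_{f_r}$ itself. I would instead apply the automorphy lifting and change-of-weight machinery of BLGGT: potentially diagonalizable lifts of prescribed Hodge--Tate weights $\{0,k_r-1\}$ exist locally, for instance inductions of crystalline characters or, after a finite extension, ordinary-type lifts. The large-image hypothesis (adequacy, which needs $p > 6$) together with $p > \max\{k,6\}$ ensures that $\overline{\rho}_{f,\fk{p}}|_{G_F}$ is potentially diagonalizably automorphic, using $f$ itself in the Fontaine--Laffaille range $k \le p$. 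BLGGT's existence-of-lifts theorem then gives, for each prescribed weight $k_r$, a global lift that is automorphic and potentially diagonalizable with Hodge--Tate weights $\{0,k_r-1\}$. The hard part is checking that the weights $\{0,k_r-1\}$ can be realized locally by potentially diagonalizable lifts for infinitely many $r$. I would handle this by choosing the $k_r$ in a fixed congruence class modulo $p-1$ (or $p^2-1$), so that the reductions of the local inductions match $\overline{\rho}|_{G_{F_v}}$ for every $v \mid p$.
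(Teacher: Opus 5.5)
Your Step~3 is a correct proof, but it follows a different route from the paper's. The paper runs no lifting argument over $F$ at all. It takes the potentially diagonalizable modular lifts $\rho_{f_r,\fk{p}_r}$ of $\overline{\rho}_{f,\fk{p}}$ of weights $k_r$ already supplied over $\Q$ by \cite[Theorem 0.1]{blancodieulefait1}. It then notes three things:
\begin{itemize}
\item restriction to $G_F$ preserves potential diagonalizability (at each $v\mid p$ one just restricts further to $G_{F_v}$);
\item $\rho_{f_r,\fk{p}_r}|_{G_F}$ is automorphic by Theorem~\ref{thm: existance of base-change lifts to F};
\item the Hodge--Tate weights are unchanged.
\end{itemize}

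You instead apply the BLGGT existence-of-lifts theorem directly over $F$, with these ingredients:
\begin{itemize}
\item the seed is $\rho_{f,\fk{p}}|_{G_F}$, which is automorphic by base change and potentially diagonalizable because $\rho_{f,\fk{p}}|_{G_{\Q_p}}$ is Fontaine--Laffaille for $p>k$ and this survives restriction;
\item the local targets are the restrictions to $G_{F_v}$ of $\mathrm{Ind}_{G_{\Q_{p^2}}}^{G_{\Q_p}}\psi$ with $\psi$ crystalline;
\item adequacy comes from the large-image hypothesis over $F$.
\end{itemize}

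The trade-off is as follows. The paper's argument is three lines and uses the hypotheses only to invoke the $F=\Q$ theorem, but it needs base change for every $f_r$. Yours uses base change only for $f$, gives an explicit infinite set of weights, and genuinely uses the large-image hypothesis over $F$. The price is checking the totally real form of \cite[Theorem 4.3.1]{blggt} over $F$: adequacy of $\overline{\rho}(G_{F(\zeta_p)})$, $\zeta_p\notin F$, the determinant/polarization, and local lifts at ramified $v\mid p$. Your trick of restricting a single lift over $\Q_p$ handles the last point uniformly.

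Two small corrections.
\begin{itemize}
\item Since $f$ is non-ordinary and $2\le k<p$, $\overline{\rho}_{f,\fk{p}}|_{G_{\Q_p}}$ is irreducible with $\overline{\rho}_{f,\fk{p}}|_{I_p}\cong\omega_2^{k-1}\oplus\omega_2^{p(k-1)}$. So you need $k_r\equiv k\pmod{p^2-1}$, after which an unramified twist of $\psi$ matches the full residual representation. A congruence modulo $p-1$ does not suffice in general.
\item Fontaine--Laffaille requires $k-1\le p-2$, i.e.\ $p>k$, not $k\le p$.
\end{itemize}

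Finally, Steps~1 and~2 are not needed. Step~2(a) already closes using the restriction-stability of potential diagonalizability that you invoke in Step~3, which is exactly the paper's proof.
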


Lastly, in Section \ref{sec: implementation in Magma} we present the pseudocode for an algorithm that computes the Hecke eigenvalues of the base-change lift as described in Section \ref{sec: Hecke eigenvalues of base-change lifts}. This allows us to explicitly compute the coefficients of the base-change lift and to pinpoint exactly the lift of a classical modular form to any totally real number field. The Magma implementations of the algorithms of Section \ref{sec: implementation in Magma} can be found in Appendix \ref{appendix: Magma implementations}.

\section{Base-change lifts of classical modular forms}
\label{sec: base-change lifts of classical modular forms}
For a number field $F$, let us denote by $G_F=\Gal(\overline{F}/F)$ the absolute Galois group of $F$. For any place $\nu$ of $F$, we fix algebraic closures $\overline{F}$ and $\overline{F}_\nu$ to get an embedding $G_{F_{\nu}}\hookrightarrow G_F$, where $F_{\nu}$ is the completion of $F$ at $\nu$. If $L/F$ is a Galois extension of number fields, it is clear that $G_L\subseteq G_F$. Let $\rho: G_F \to \GL_2(R)$ be a continuous Galois representation over a ring $R$, which can be either the ring of integers of a number field, a local field or a finite field. For a place $\nu$ of $F$ such that $\rho$ is unramified we denote $\rho_{\nu} := \rho|_{D_{\nu}}$, where $D_{\nu}$ is the decomposition group at $\nu$. Lastly, for any field $F$, we will write $\O_F$ for the ring of integers of $F$, $\A_F$ for the ring of adeles of $F$ and $\nu$ for a place of $F$.

To discuss the concept of base change, we need a few classical results concerning representations associated to modular forms. The following result by Deligne and Serre \cite[Theorem 6.1]{serre1974formes} is well known.
\begin{theorem}
    \label{thm:trace and determinant of Deligne representation}
    Let $f \in S_k(N, \chi)$ be a classical newform of level $N$ and nebentypus $\chi$ and denote the coefficient field of $f$ by $K_f$. Finally, let $q$ be a rational prime and let $\fk{q} \subset \O_{K_f}$ be a prime ideal above $q$. Then, there exists a Galois representation
    \begin{align*}
        \rho_{f,\fk{q}}: G_\Q \to \GL_2(\O_{K_f, \fk{q}})
    \end{align*}
    unramified at all primes $p \nmid Nq$. Moreover, the representation satisfies the properties
    \begin{align*}
        \Tr(\rho_{f,\fk{q}}(\Fr_p)) &= a_p \quad \text{ and } \quad \det(\rho_{f,\fk{q}}(\Fr_p)) = \chi(p) p^{k-1},
    \end{align*}
    where $a_p$ is the $p$-th Fourier coefficient of $f$, or equivalently, the Hecke eigenvalue of $f$ at $p$.
\end{theorem}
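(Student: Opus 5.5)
The plan follows the classical Eichler--Shimura--Deligne route: realize $\rho_{f,\fk{q}}$ inside the étale cohomology of a modular curve (weight $2$) or a Kuga--Sato variety (weight $k>2$), and read off the trace and determinant from the Eichler--Shimura congruence relation.

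\textbf{Step 1 (geometric setup).} Let $Y_1(N)$ be the modular curve over $\Z[1/N]$, $\mathcal{E}\to Y_1(N)$ the universal elliptic curve, and $\mathcal{F}_{k} = \mathrm{Sym}^{k-2} R^1\pi_*\Q_q$. Set
\[
W = H^1_{\mathrm{par}}(Y_1(N)_{\overline{\Q}}, \mathcal{F}_k),
\]
the image of compactly supported cohomology in ordinary cohomology; equivalently, $W$ is a piece of $H^{k-1}$ of the Kuga--Sato variety. By smooth and proper base change, $G_\Q$ acts on $W$ unramified outside $Nq$. The Hecke operators $T_p$ and the diamond operators $\langle d\rangle$ act by correspondences that commute with $G_\Q$. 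Comparison with Betti cohomology and the Eichler--Shimura isomorphism give $W\otimes\C \cong S_k(\Gamma_1(N))\oplus\overline{S_k(\Gamma_1(N))}$ as Hecke modules.

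\textbf{Step 2 (cutting out $f$).} Extend scalars to $K_{f,\fk{q}}$. Let $\mathbb{T}$ be the Hecke algebra, and take the $f$-isotypic quotient
\[
V_f = W\otimes_{\mathbb{T}\otimes\Q_q} K_{f,\fk{q}}
\]
via the eigencharacter $T_p\mapsto a_p$, $\langle d\rangle\mapsto\chi(d)$. By multiplicity one for newforms together with Step 1, $V_f$ is $2$-dimensional over $K_{f,\fk{q}}$. Since $G_\Q$ is compact, it stabilizes a lattice, and so we obtain $\rho_{f,\fk{q}}:G_\Q\to\GL_2(\O_{K_f,\fk{q}})$, unramified at all $p\nmid Nq$.

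\textbf{Step 3 (Eichler--Shimura congruence).} This is the main obstacle. For $p\nmid Nq$, we use the good reduction of $Y_1(N)$ at $p$ and the moduli description of $T_p$ in characteristic $p$: the correspondence decomposes as $T_p = \Fr + p^{k-1}\langle p\rangle\Fr^{-1}$, i.e.\ Frobenius plus $\langle p\rangle$ times Verschiebung, with the factor $p^{k-2}$ coming from the coefficient sheaf. This yields the identity on $W$
\[
\Fr_p^2 - T_p\,\Fr_p + p^{k-1}\langle p\rangle = 0 .
\]
To prove it, I would first treat the Igusa-curve picture of the two components of the reduction of $X_0(p)$-level structures, and then handle the sheaf $\mathcal{F}_k$ by tracking the action on $\mathrm{Sym}^{k-2}$ of the Tate module under the Frobenius and Verschiebung isogenies.

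\textbf{Step 4 (conclusion).} On $V_f$ the relation becomes $\Fr_p^2 - a_p\Fr_p + \chi(p)p^{k-1}=0$. This gives the characteristic polynomial of $\rho_{f,\fk{q}}(\Fr_p)$ on the $2$-dimensional space, provided it is not a scalar with a repeated root in a degenerate way. To exclude that case, I would instead compute the traces of $\Fr_p$ on $W$ via the Lefschetz trace formula and compare them with the Eichler--Selberg trace formula for $T_p$, or argue with semisimplicity together with the determinant computed from the cup-product pairing. The pairing has values in $\Q_q(1-k)$ twisted by the character $\chi$, which gives $\det = \chi(p)p^{k-1}$ directly. Then the trace equals $a_p$.
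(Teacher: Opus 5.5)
The paper gives no proof of this statement. It quotes it from Deligne--Serre, where for $k\ge 2$ it rests on Deligne's construction, and your sketch is the standard proof behind that citation: the Hecke-isotypic piece of parabolic \'etale cohomology of the Kuga--Sato variety, the Eichler--Shimura congruence relation, and the determinant from the twisted Poincar\'e pairing. So it is correct and essentially the same route, implicitly for $k\ge 2$, which is all the paper uses.

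One tidy-up in Step~4. The ambiguous case is not a repeated root; it is $\Fr_p$ acting on $V_f$ as the scalar equal to one of two \emph{distinct} roots. No semisimplicity is needed: once $\det \Fr_p=\chi(p)p^{k-1}$ is known, subtracting Cayley--Hamilton from the congruence relation gives $(\Tr(\Fr_p)-a_p)\Fr_p=0$, hence $\Tr(\Fr_p)=a_p$. With your conventions $\Fr_p$ is geometric Frobenius on a quotient of $H^1$, so take the dual to match the arithmetic-Frobenius normalization the paper uses in Section~3.
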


For the following definition, we keep the notation of the previous theorem.
\begin{definition}
Let $F$ be a number field. If the restriction $\rho_{f,\fk{q}}|_{G_F}$ is automorphic, that is, it is attached to an automorphic representation 
$$\pi_{f,F}= \bigotimes_{\nu} \pi_{\nu}$$
of $\GL_2(\mathbb{A}_F)$, we say that $\pi$ is an \emph{automorphic base-change lift} of $\rho_{f,\fk{q}}$ (or $f$) to $F$.
\end{definition}
Notice that in this case, we have that for each place $\nu$ of $F$ not dividing $N$
$$
\rho_{f,\fk{q}}|_{D_{\nu}}(\Fr_{\nu})=t_{\pi_{\nu}},
$$
where $D_{\nu}$ is the decomposition group at $\nu$ and $t_{\pi_{\nu}}$ is the local Langlands class at $\nu$ (see \cite[Chapter VI]{cornelletal} for details).

The relevant case to us is when $F$ is a totally real number field. In this setting Hilbert modular cusp forms, like classical modular cusp forms, provide a source of automorphic representations. Indeed, following the notation in the write-up by Dimitrov \cite{dimitrov2013arithmetic}, there is a canonical bijection between Hilbert newforms $h$ in $S_{k,w_0}(K_1(\fk{N}),\psi)$ and cuspidal automorphic representations $\pi$ of $\GL_2(\A_F)$ of conductor $\fk{N} \subset \O_F$, central character $\psi$, and where the Archimedean representation $\pi_\infty$ belongs to the holomorphic discrete series of arithmetic weights $(k,w_0)$ (see \cite[p.~97]{bump} and \cite{dimitrov2013arithmetic}). It is also possible to attach Galois representations to Hilbert cusp forms by generalizing Theorem \ref{thm:trace and determinant of Deligne representation} (see Carayol \cite{carayol1986representations}, Taylor \cite{taylor1989galois}, Blasius and Rogawski \cite{blasius1989galois}, and Wiles \cite{wiles1986p}). For a survey on the known results, see Jarvis \cite{jarvis1997galois}.

The classical Serre's modularity conjecture over $\Q$ states that for any odd and absolutely irreducible $2$-dimensional representation $\overline{\rho}$ of $G_\Q$ over a finite field, one can find a classical eigenform $f$ such that the residual Deligne representation $\overline{\rho}_{f, \fk{q}}$ (see Theorem \ref{thm:trace and determinant of Deligne representation}) and $\overline{\rho}$ are equivalent. This was first proved for level 1 and weight 2 by Dieulefait \cite{dieulefait2007level}, and later in full generality by Khare and Wintenberger \cite{khare2009serreI, khare2009serreII}.

The generalized Serre's modularity conjecture for totally real number fields $F$ predicts that odd and absolutely irreducible residual representations of $G_F$ are attached to Hilbert modular forms and hence to automorphic representations. For the cases where the residual representation takes values in $\F_2$ or $\F_3$, the generalized conjecture follows from the work of Langlands--Tunnell \cite{langlands, tunnell1981artin}. The generalized Serre's conjecture has also been established for representations taking values in $\mathbb{F}_4$ (\cite{taylorsbarron}), $\mathbb{F}_5$ (\cite{taylor2003}), $\mathbb{F}_7$ (\cite{manoharmayum}) and $\mathbb{F}_9$ (\cite{ellenberg}). The generalized conjecture is still a very active and rich area of research.

Theorem \ref{thm:trace and determinant of Deligne representation} together with the results in \cite{carayol1986representations}, \cite{blasius1989galois} and \cite{wiles1988} suggest a strong connection between the automorphic representations attached to classical cusp forms and those attached to Hilbert modular forms. This connection is one of the objects of the Langlands functoriality conjectures, namely, the base-change problem, which for totally real number fields $F$ can be stated as follows:
\begin{definition}
    Let $f$ be a classical newform and denote by $\rho_{f,\fk{q}}$ the Galois representation attached to $f$. We call the representation $\rho_{h,\fk{q}}$ attached to a Hilbert modular form $h$ \emph{a base-change lift of $\rho_{f,\fk{q}}$ to $F$} if the restriction 
    $$
    \rho_{f,\fk{q}}|_{G_F} : G_F \to \GL_2(\O_{{K_f}, \fk{q}})
    $$
    is equivalent to the representation
    $$
    \rho_{h,\fk{q}}: G_F \to \GL_2(\O_{{K_h}, \fk{q}}),
    $$
    where $K_h$ is the Hecke eigenvalue field of $h$. Moreover, we say that the Hilbert modular form $h$ is the \emph{base-change lift of $f$ to $F$}.
\end{definition}

We have the following strong result regarding the existence of the base-change lifts of classical newforms to totally real number fields $F$.
\begin{theorem}
\label{thm: existance of base-change lifts to F}
    Let $F$ be a totally real number field and let $f$ be a newform of any positive level and weight $k \geq 2$. Then there exists a Hilbert modular form $h$ that is a base-change lift of $f$ to $F$.
\end{theorem}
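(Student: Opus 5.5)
This theorem is the existence result of \cite{dieulefait1} and \cite{dieulefait2} (with the corrigenda \cite{blancodieulefait}). I would prove it through modularity lifting and a chain of congruences ("safe chain"), not by constructing $h$ directly. Since \cite{dieulefait2} removes the Galois hypothesis, we may treat $F$ as an arbitrary totally real field. The aim is to show that the restriction $\rho_{f,\fk{q}}|_{G_F}$ is automorphic for $\GL_2/F$. Then the bijection between Hilbert newforms and cuspidal automorphic representations with holomorphic discrete series at infinity recalled above produces a Hilbert newform $h$ with $\rho_{h,\fk{q}}\simeq\rho_{f,\fk{q}}|_{G_F}$. The weight is parallel $k$, read off from the Hodge--Tate weights $\{0,k-1\}$ at each real place, and the central character is $\chi\circ\Norm_{F/\Q}$, as the determinant formula in Theorem \ref{thm:trace and determinant of Deligne representation} forces.

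First I would dispose of the easy cases. If $f$ has CM by an imaginary quadratic field $K$, then $\rho_{f,\fk{q}}$ is induced from a Hecke character of $K$. Hence $\rho_{f,\fk{q}}|_{G_F}$ is induced from $FK$, which is CM since $F$ is totally real, so it is automorphic by automorphic induction. If $F/\Q$ is solvable, Langlands \cite{langlands} and Arthur--Clozel \cite{arthurclozel} already give the lift.

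The general case is the main obstacle. Following \cite{dieulefait1}, I would build a chain of newforms $f=f_0, f_1,\dots,f_m=g$ such that consecutive forms are congruent modulo some prime $\ell_i$. The chain is designed so that every residual representation has large image restricted to $G_F$; this is the hard bookkeeping, done through level raising at auxiliary primes, weight changes via Hida/Coleman families, and "good dihedral" primes. The endpoint $g$ should have a simple structure, for instance be CM or have level and weight for which the restriction to $G_F$ is automorphic by solvable base change. One then walks back along the chain. Because consecutive forms are congruent, automorphy of $\rho_{f_{i+1},\ell_i}|_{G_F}$ gives residual automorphy of $\overline{\rho}_{f_i,\ell_i}|_{G_F}$. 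A potentially diagonalizable / Fontaine--Laffaille modularity lifting theorem (Barnet-Lamb--Gee--Geraghty--Taylor, Kisin) then upgrades this to automorphy of $\rho_{f_i,\ell_i}|_{G_F}$, and compatibility of the $\lambda$-adic system switches to the next prime of the chain. The delicate points are maintaining adequate residual image over $F$ (I would control it by a non-solvable, huge-image choice of primes) and matching local types at ramified places. Once the first link $f_0=f$ is reached, we get automorphy of $\rho_{f,\fk{q}}|_{G_F}$ for every $\fk{q}$, and $h$ exists as described above.
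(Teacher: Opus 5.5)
Your proposal is correct and follows the paper, whose proof consists only of citing Dieulefait: \cite{dieulefait1} under odd-level and splitting hypotheses, and \cite[Section 5]{dieulefait2} in full generality. Your safe-chain outline summarizes the method behind that citation: a congruence chain ending at a form with known base change such as a CM form, residual automorphy plus a potentially diagonalizable modularity lifting theorem, and switching primes through the compatible system. It is not an independent proof, since the hard step of keeping the residual images large after restriction to $G_F$ (done there with good-dihedral-type auxiliary primes) is deferred to those works, exactly as in the paper.
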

\begin{proof}
    The result was first proved by Dieulefait \cite{dieulefait1} for newforms of odd level and weight $k \geq 2$ together with some conditions on the splitting behaviour of a small set of primes in $F$. However, these conditions were lifted and the theorem was proved in its full generality as stated above in a subsequent work by Dieulefait \cite[Section 5]{dieulefait2}.
\end{proof}

Apart from the case where $[F : \mathbb{Q}]$ is prime, which was solved in \cite{doi1969functional} and \cite{saito1976lifting}, the general proof of this result is non-constructive. In the next section, we develop a method to compute explicitly the base-change lift of a classical newform to a totally real number field.

\section{Hecke eigenvalues of base-change lifts}
\label{sec: Hecke eigenvalues of base-change lifts}
For this section, we fix a newform $f \in S_k(\Gamma_1(N), \chi)^{\operatorname{new}}$ of weight $k \geq 2$, level $N$, and nebentypus $\chi$ with the Fourier expansion
\begin{align*}
    f(z) &= \sum_{n = 1}^\infty a(n) e^{2 \pi i n z}.
\end{align*}

In the proof of the main theorem (Theorem \ref{thm:hecke eigenvalues of a base-change lift}) of this section, we need the following lemma that unravels the recursion in the traces of powers of matrices
\begin{align*}
       \Tr(A^{n+1}) &= \Tr(A)\Tr(A^n) - \det(A)\Tr(A^{n-1})
\end{align*}
together with the recursive formula
\begin{align*}
    a(p^{n}) &= a(p)a(p^{n-1}) - \chi(p)p^{k-1} a(p^{n-2})
\end{align*}
for the Fourier coefficients of $f$ at consequent powers of $p$.

\begin{lemma}
\label{lemma:trace_of_frobenius_power}
Let $\rho_{f,\fk{q}}$ be the Galois representation attached to $f$, where $\fk{q}$ lies above some rational prime $q \nmid N$. Fix a prime $p$ not dividing $qN$. Then, for any integer $n \geq 2$
\begin{align*}
    \Tr(\rho_{f,\fk{q}}(\Fr_p)^n) = a(p^n) - \chi(p) p^{k-1} a(p^{n - 2}).
\end{align*}
\end{lemma}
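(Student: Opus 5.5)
We argue by strong induction on $n$, with $A := \rho_{f,\fk{q}}(\Fr_p)$. Since $p \nmid qN$, the representation is unramified at $p$, and Theorem \ref{thm:trace and determinant of Deligne representation} gives $\Tr(A) = a(p)$ and $\det(A) = \chi(p)p^{k-1}$. We will use two recursions. The first is the Cayley--Hamilton recursion
\begin{align*}
\Tr(A^{n+1}) = \Tr(A)\Tr(A^n) - \det(A)\Tr(A^{n-1}).
\end{align*}
It follows by multiplying $A^2 = \Tr(A)A - \det(A)I$ by $A^{n-1}$ and taking traces, and it holds for all $n \geq 1$ with $A^0 = I$. The second is the Hecke recursion
\begin{align*}
a(p^{n}) = a(p)a(p^{n-1}) - \chi(p)p^{k-1}a(p^{n-2}),
\end{align*}
valid for $n \geq 2$ with $a(1) = 1$. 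Write $c := \chi(p)p^{k-1}$ and $t_n := a(p^n) - c\,a(p^{n-2})$ for $n \geq 2$.

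For the base cases we check $n = 2$ and $n = 3$ directly. For $n = 2$,
\begin{align*}
\Tr(A^2) = a(p)^2 - 2c.
\end{align*}
The Hecke recursion gives $a(p^2) = a(p)^2 - c$, so $t_2 = a(p)^2 - c - c = \Tr(A^2)$. For $n = 3$, we have $\Tr(A^3) = a(p)\Tr(A^2) - c\,a(p)$ and $a(p^3) = a(p)a(p^2) - c\,a(p)$. Hence
\begin{align*}
t_3 = a(p^3) - c\,a(p) = a(p)\bigl(a(p)^2 - c\bigr) - 2c\,a(p) = a(p)^3 - 3c\,a(p),
\end{align*}
which agrees with $a(p)\bigl(a(p)^2 - 2c\bigr) - c\,a(p) = \Tr(A^3)$. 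Alternatively, one can take $n = 1$ as a base case by interpreting $a(p^{-1}) = 0$, which gives $t_1 = a(p) = \Tr(A)$.

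For the inductive step, suppose the formula holds for $n$ and $n-1$, where $n \geq 3$. We observe that $t_n$ satisfies the same linear recursion as $\Tr(A^n)$. Applying the Hecke recursion to both terms in
\begin{align*}
t_{n+1} = a(p^{n+1}) - c\,a(p^{n-1})
\end{align*}
gives
\begin{align*}
t_{n+1} = a(p)\bigl(a(p^n) - c\,a(p^{n-2})\bigr) - c\bigl(a(p^{n-1}) - c\,a(p^{n-3})\bigr) = a(p)\,t_n - c\,t_{n-1}.
\end{align*}
Substituting the inductive hypotheses $t_n = \Tr(A^n)$ and $t_{n-1} = \Tr(A^{n-1})$ and comparing with the Cayley--Hamilton recursion gives $t_{n+1} = \Tr(A^{n+1})$. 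The only delicate point is indexing. The Hecke recursion must be applied to $a(p^{n-1})$ as well, which requires $n-1 \geq 2$; this is why both $n = 2$ and $n = 3$ are verified directly, or why the convention $a(p^{-1}) = 0$ is introduced. Beyond that the argument is routine.
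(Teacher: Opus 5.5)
Your proof is correct and follows essentially the same route as the paper: induction on $n$, showing that $a(p^n) - \chi(p)p^{k-1}a(p^{n-2})$ obeys the same two-term recursion as $\Tr(A^n)$ by applying the Hecke recursion to both $a(p^{n+1})$ and $a(p^{n-1})$. Your extra base case $n=3$ (or the convention $a(p^{-1})=0$) is genuinely needed: the paper's step from $n=2$ to $n=3$ silently uses the $n-1=1$ instance and the term $a(p^{-1})$, which its stated inductive hypothesis (only $n \geq 2$) does not cover.
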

\begin{proof}
The proof is by induction on $n$. Denote by $A$ the image $\rho_{f,\fk{q}}(\Fr_p)$, where $\Fr_p$ is any Frobenius element at $p$. We know from Theorem \ref{thm:trace and determinant of Deligne representation} that 
\begin{align*}
    \Tr(A) &= \Tr(\rho_{f,\fk{q}}(\Fr_p)) = a(p) \quad \text{ and } \quad \det(A) = \det(\rho_{f,r}(\Fr_p)) = \chi(p)p^{k-1}.
\end{align*}
The base case $n = 2$ is clear, since
\begin{align*}
    a(p^2) - \chi(p) p^{k-1} &= a(p)^2 - 2\chi(p) p^{k-1}
    = \Tr(A)^2 - 2 \det (A) = \Tr(A^2).
\end{align*}
In fact, this is exactly the formula presented by Doi and Naganuma for quadratic fields \cite{doi1969functional, naganuma1973coincidence}.

For the induction step, we assume that the statement holds for all integers greater or equal to 2 up to $n$. Then for $n+1$, we get
\begin{align*}
    &a(p^{n+1}) - \chi(p) p^{k-1} a(p^{n-1}) \\
    &= a(p)a(p^n) - \chi(p) p^{k-1} a(p^{n-1}) - \chi(p) p^{k-1} a(p^{n-1}) \\
    &= a(p)a(p^n) - \chi(p) p^{k-1}\left(a(p) a(p^{n-2}) - \chi(p)p^{k-1} a(p^{n-3})\right) - \chi(p) p^{k-1} a(p^{n-1}) \\
    &= a(p)\left(a(p^n) - \chi(p) p^{k-1} a(p^{n-2})\right) - \chi(p)p^{k-1} \left(a(p^{n-1}) - \chi(p)p^{k-1}a(p^{n-3})\right) \\
    &= \Tr(A)\Tr(A^n) - \det(A)\Tr(A^{n-1}) \\
    &= \Tr(A^{n+1}).
\end{align*}
\end{proof}

\begin{theorem}
\label{thm:hecke eigenvalues of a base-change lift}
    Let $F$ be a totally real number field. Denote by $h$ the base-change lift of $f$ to $F$ from Theorem \ref{thm: existance of base-change lifts to F}. Then for a rational prime $p$ not dividing the level $N$, the Hecke eigenvalue of $h$ at a prime $\fk{p} \subset \O_F $ above $p$ is 
    \begin{align*}
        C(\fk{p}) = \begin{cases}
            a(p) & \text{if } r = 1 \\
            a(p^r) - \chi(p)p^{k-1} a(p^{r-2}) & \text{otherwise},
        \end{cases}
    \end{align*}
    where $r = f(\fk{p} \mid p)$ is the residual degree of $\fk{p}$ over $p$.
\end{theorem}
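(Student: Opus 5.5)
The plan is to compare traces of Frobenius on both sides of the equivalence $\rho_{h,\fk{q}} \cong \rho_{f,\fk{q}}|_{G_F}$ that defines the base-change lift, and then to invoke Lemma \ref{lemma:trace_of_frobenius_power}. First fix $p \nmid N$ and a prime $\fk{p} \subset \O_F$ above $p$. Then choose an auxiliary rational prime $q \neq p$ and a prime $\fk{q}$ of the coefficient field above $q$. Since the Hecke eigenvalue $C(\fk{p})$ does not depend on this choice, we may take $q$ so that $p \nmid qN$. Theorem \ref{thm:trace and determinant of Deligne representation} then says $\rho_{f,\fk{q}}$ is unramified at $p$. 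Its restriction to $G_F$ is therefore unramified at $\fk{p}$, and hence so is $\rho_{h,\fk{q}}$. By the Hilbert-modular analogue of Theorem \ref{thm:trace and determinant of Deligne representation} (Carayol, Taylor, Blasius--Rogawski, Wiles), $\Tr \rho_{h,\fk{q}}(\Fr_{\fk{p}}) = C(\fk{p})$. For this to apply, $\fk{p}$ must not divide the level of $h$. I would justify this by noting that the conductor of $\rho_{f,\fk{q}}|_{G_F}$ is supported on primes above $N$ (and $q$, which we can vary).

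The key local step is identifying $\Fr_{\fk{p}}$ inside $G_\Q$. Choose a prime $\mathfrak{P}$ of $\overline{\Q}$ above $\fk{p}$. The decomposition group $D_{\mathfrak{P}}(\overline{\Q}/F)$ is $D_{\mathfrak{P}}(\overline{\Q}/\Q) \cap G_F$. A Frobenius $\Fr_{\fk{p}}$ acts on the residue field as $x \mapsto x^{N\fk{p}} = x^{p^r}$, where $r = f(\fk{p}\mid p)$. The element $\Fr_p^r$, with $\Fr_p$ the Frobenius attached to $\mathfrak{P}$, acts on the residue field in the same way. Hence $\Fr_{\fk{p}}$ and $\Fr_p^r$ agree modulo the inertia group $I_{\mathfrak{P}}$. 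I also need $\Fr_p^r$ to lie in $G_F$ up to inertia. This holds because the image of $D_{\mathfrak{P}}$ in $\Gal(\tilde F/\Q)$, where $\tilde F$ is the Galois closure, acts on the residue field of $F$ at $\fk{p}$, which has degree $r$ over $\F_p$. Since $\rho_{f,\fk{q}}$ kills $I_{\mathfrak{P}}$, this gives
\begin{align*}
\rho_{f,\fk{q}}|_{G_F}(\Fr_{\fk{p}}) = \rho_{f,\fk{q}}(\Fr_p)^r .
\end{align*}

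Taking traces gives $C(\fk{p}) = \Tr(\rho_{f,\fk{q}}(\Fr_p)^r)$. For $r=1$ this is $a(p)$ by Theorem \ref{thm:trace and determinant of Deligne representation}. For $r \ge 2$ it equals $a(p^r) - \chi(p)p^{k-1}a(p^{r-2})$ by Lemma \ref{lemma:trace_of_frobenius_power}.

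I expect the main obstacle to be the Frobenius comparison when $F$ is not Galois. The decomposition group of $F$ at $\fk{p}$ is then not simply a subgroup of a cyclic quotient of $\Gal(F/\Q)$, and ramification of $p$ in $F$ (where $e>1$) must be handled. I would work entirely inside $G_\Q$ with decomposition and inertia groups at $\mathfrak{P}$, which avoids any Galois hypothesis on $F$. In that setting the quotient $D_{\mathfrak{P}}/I_{\mathfrak{P}} \cong \hat{\Z}$ is topologically generated by $\Fr_p$, and the subgroup coming from $G_F$ has index exactly $r$. This forces its generator to be $\Fr_p^r$ modulo inertia, regardless of the ramification index $e$.
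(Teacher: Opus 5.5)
Your proposal is correct and follows the same route as the paper. Both identify $\Fr_{\fk{p}}$ with $\Fr_p^r$ modulo inertia inside the decomposition group at $p$ (the paper does this via the commutative diagram of the two inertia--decomposition--residue-field exact sequences), take traces through $\rho_{h,\fk{q}}\cong\rho_{f,\fk{q}}|_{G_F}$, and apply Lemma \ref{lemma:trace_of_frobenius_power}. Your extra care in choosing $q$ with $p\nmid qN$ and in ensuring $\fk{p}$ is prime to the level of $h$ (a point that really rests on local--global compatibility) fills in things the paper passes over silently. Your separate claim that $\Fr_p^r$ lies in $G_F$ up to inertia, with its somewhat muddled justification, is unnecessary: $\Fr_{\fk{p}}\in G_F$ already differs from $\Fr_p^r$ by an element of $I_{\mathfrak{P}}$.
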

\begin{proof}
The proof follows from the study of the Frobenius elements $\Fr_\fk{p} \in G_{F}$ at primes $\fk{p} \subset \O_F$ when viewed as an element of the absolute Galois group $G_\Q$. 

Let us fix a prime $p$ not dividing $N$ and take any prime ideal $\fk{p} \subset \O_F$ above $p$. Then we know that the associated Galois representation $\rho_{f,\fk{q}}$ for $p \notin \fk{q}$ is unramified at $p$ and the image of the inertia subgroup $I_p$ under $\rho_{f,\fk{q}}$ is trivial. Moreover, we have an exact sequence
\begin{align}
\label{eq: exact_seq1}
    0 \to I_p \to D_p \xrightarrow{\pi_p} G_{\F_p} \to 0.
\end{align}
The map $\pi_p$ is given componentwise for each finite Galois extension $K / \Q_p$ as
\begin{align*}
    \pi_p : D_p &\to G_{\F_p} \\
    (\sigma)_K &\mapsto \pi_p(\sigma): \ \left(x +\fk{P}_K \mapsto \sigma(x) + \fk{P}_K\right)_K,
\end{align*}
where $\fk{P}_K = \fk{P} \cap \O_K$ and $\fk{P} \in \overline{\Z}_p$ is the maximal ideal above $p$.

Similarly, we get the exact sequence
\begin{align}
 \label{eq: exact_seq2}
    0 \to I_\fk{p} \to D_{\fk{p}} \xrightarrow{\pi_\fk{p}} G_{\F(\fk{p})} \to 0,
\end{align}
where $\F(\fk{p})$ denotes the finite field $\O_F / \fk{p}$. Here the map
$\pi_\fk{p}$ is given analogously by
\begin{align*}
    \pi_\fk{p} : D_{\fk{p}} &\to G_{\F(\fk{p})} \\
    (\sigma)_K &\mapsto \pi_\fk{p}(\sigma): \ \left(x +\fk{P}_K \mapsto \sigma(x) + \fk{P}_K\right)_K,
\end{align*}
where $K$ runs over finite $p$-adic Galois extension $K / F_{\fk{p}}$ and $x$ in $\O_K$.

Moreover, since the inertia subgroup at $\fk{p}$ is defined as the inverse limit
\begin{align*}
    I_\fk{p} = \varprojlim_{K / F_{\fk{p}}}\, I(\fk{P}_K \mid \fk{p})
\end{align*}
running over finite $p$-adic Galois extensions of $F_\fk{p}$, we have the inclusion $I_\fk{p} \xhookrightarrow{} I_p$. An identical argument gives us an inclusion of the decomposition group $D_{\fk{p}}$ into $D_p$. Moreover, $\F(\fk{p})$ is an extension of $\F_p$, yielding yet another inclusion $G_{\F(\fk{p})} \xhookrightarrow{} G_{\F_p}$. These inclusions together with the two exact sequences \eqref{eq: exact_seq1} and \eqref{eq: exact_seq2} give us the following commutative diagram.
\[\begin{tikzcd}
	0 & {I_\fk{p}} & {D_{\fk{p}}} & {G_{\F(\fk{p})}} & 0 \\
	0 & {I_p} & {D_p} & {G_{\F_p}} & 0
	\arrow[from=1-1, to=1-2]
	\arrow[from=1-2, to=1-3]
	\arrow["{\pi_\fk{p}}", from=1-3, to=1-4]
	\arrow[from=1-3, to=2-3]
	\arrow[from=1-4, to=1-5]
	\arrow[from=1-4, to=2-4]
	\arrow[from=1-2, to=2-2]
	\arrow[from=2-1, to=2-2]
	\arrow[from=2-2, to=2-3]
	\arrow["{\pi_p}", from=2-3, to=2-4]
	\arrow[from=2-4, to=2-5]
\end{tikzcd}\]

Consider the arithmetic Frobenius at $\fk{p}$
\begin{align*}
    \phi_\fk{p}: \overline{\F(\fk{p})} &\to \overline{\F(\fk{p})} \\
    x &\mapsto x^{r},
\end{align*}
where $r$ denotes the residual degree $[\F(\fk{p}) : \F_p]$. We shall denote by $\Fr_\fk{p}$ any preimage under $\pi_\fk{p}$ of the arithmetic Frobenius $\phi_\fk{p}$, which is the topological generator of $G_{\F(\fk{p})}$, and call $\Fr_\fk{p}$ the Frobenius element at $\fk{p}$.

We are interested in the image of $\Fr_\fk{p} \in D_\fk{p}$ in $D_p$ under the aforementioned inclusion and the relation of this image to the Frobenius element $\Fr_p \in D_p$. Clearly, the image of the generator $\phi_\fk{p}$ in $G_{\F_p}$ equals $\phi_p^{r}$. We note that $\pi_\fk{p}^{-1}(\phi_{\fk{p}}) = \Fr_\fk{p} \circ I_\fk{p}$ and that $I_{\fk{p}} \xhookrightarrow{} I_p$. Therefore, by the commutativity of the diagram above, we see that
\begin{align*}
    \pi_p (\Fr_{\fk{p}} \circ I_{\fk{p}}) &= \pi_p (\pi_{\fk{p}}^{-1}(\phi_{\fk{p}})) = \phi_p^{r}.
\end{align*}
This implies that the image of $\Fr_\fk{p}$ in $D_p$ equals $\Fr_p^{r}$ up to inertia.

Now, we use the representations attached to $f$ and to the base change lift $h$  of $f$ to $F$ to see that the Hecke eigenvalue $C(\fk{p})$ of $h$ at $\fk{p}$ must be
\begin{align*}
    C(\fk{p}) = \Tr(\rho_{h,\fk{q}}(\Fr_{\fk{p}})) = \Tr(\rho_{f,\fk{q}}(\Fr_{p}^r)) = \Tr(\rho_{f,\fk{q}}(\Fr_{p})^r).
\end{align*}
The first equality follows from the properties of the Galois representation attached to $h$ (see \cite{carayol1986representations,taylor1989galois,blasius1989galois,wiles1986p}). Finally, the 
result follows by applying Lemma \ref{lemma:trace_of_frobenius_power} with $n = r$.
\end{proof}

\section{Factorization of L-series of base-change lifts}
\label{sec: Factorization of L-series of base-change forms}
We fix a newform $f \in S_k(\Gamma_1(N), \chi)^{\operatorname{new}}$ of weight $k \geq 2$, level $N$ and nebentypus $\chi$ with the Fourier expansion
\begin{align*}
    f(z) &= \sum_{n = 1}^\infty a(n) e^{2 \pi i n z}.
\end{align*}
We let $h$ denote the base-change lift of $f$ to a totally real Galois number field $F$ of degree $n$. The simplifying assumption that $F$ is Galois allows us to write directly the Galois group of the extension $F / \Q$ without the need of taking normal closures. In this section, our goal is to prove the following theorem.
\begin{theorem}
    \label{thm: factorization of L(h,s)}
    Let $\rhoreg : \Gal(F / \Q) \to \GL_n(\C)$ be the regular representation of $\Gal(F / \Q)$. Then up to a finite number of Euler factors that occur at primes in the level the $L$-function of $h$ factorizes as
    \begin{align}
        \label{eq: nonabelian as tensor with regular rep}
        L(h, s) = L(f \otimes \rhoreg, s).
    \end{align}
\end{theorem}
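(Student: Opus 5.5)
We compare Euler factors prime by prime. Fix a rational prime $p \nmid N$ that is unramified in $F$; excluding the finitely many primes dividing $N\cdot\operatorname{disc}(F)$ only discards finitely many Euler factors. Since $F/\Q$ is Galois, all primes $\fk{p}$ of $F$ above $p$ have the same residual degree $r$, and there are $g = n/r$ of them. Write $\alpha, \beta$ for the eigenvalues of $A = \rho_{f,\fk{q}}(\Fr_p)$, so that $\alpha+\beta = a(p)$ and $\alpha\beta = \chi(p)p^{k-1}$. The local factor of $L(f\otimes\rhoreg,s)$ at $p$ is defined as the inverse of $\det\bigl(1 - (A\otimes\rhoreg(\sigma_p))p^{-s}\bigr)$. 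Here $\sigma_p\in\Gal(F/\Q)$ is the Frobenius at $p$, which is well defined up to conjugacy and has order $r$.

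The product over $\fk{p}\mid p$ of the local factors of $L(h,s)$ is
\begin{align*}
\prod_{\fk{p}\mid p}\bigl(1 - C(\fk{p})N\fk{p}^{-s} + \psi(\fk{p})N\fk{p}^{k-1-2s}\bigr)^{-1}.
\end{align*}
By Theorem \ref{thm:hecke eigenvalues of a base-change lift} and Lemma \ref{lemma:trace_of_frobenius_power}, $C(\fk{p}) = \Tr(A^r) = \alpha^r+\beta^r$. The central character of $h$ is $\chi\circ\Norm_{F/\Q}$ times the appropriate power of the norm, since the determinant of $\rho_{f,\fk{q}}|_{G_F}$ is the restriction of $\det\rho_{f,\fk{q}}$. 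This gives $\psi(\fk{p})N\fk{p}^{k-1} = \chi(p)^r p^{r(k-1)} = (\alpha\beta)^r$, because $\Fr_{\fk{p}}$ maps to $\Fr_p^r$ as shown in the proof of Theorem \ref{thm:hecke eigenvalues of a base-change lift}. Substituting $N\fk{p}=p^r$ and $X = p^{-s}$, each local factor becomes $\bigl((1-\alpha^rX^r)(1-\beta^rX^r)\bigr)^{-1}$. The product over $\fk{p}\mid p$ is therefore
\begin{align*}
\bigl((1-\alpha^rX^r)(1-\beta^rX^r)\bigr)^{-g}.
\end{align*}

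On the other side, restricted to the cyclic group $\langle\sigma_p\rangle$ of order $r$, the regular representation decomposes as $g$ copies of the regular representation of $\Z/r\Z$. Hence the eigenvalues of $\rhoreg(\sigma_p)$ are the $r$-th roots of unity $\zeta$, each with multiplicity $g$. Then
\begin{align*}
\det\bigl(1-(A\otimes\rhoreg(\sigma_p))X\bigr) = \prod_{\zeta^r=1}\bigl((1-\alpha\zeta X)(1-\beta\zeta X)\bigr)^{g}.
\end{align*}
The identity $\prod_{\zeta^r=1}(1-\zeta Y) = 1-Y^r$ shows this equals $\bigl((1-\alpha^rX^r)(1-\beta^rX^r)\bigr)^{g}$, which matches the product computed above.

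The main obstacle is the determinant term. Theorem \ref{thm:hecke eigenvalues of a base-change lift} only provides $C(\fk{p})$, so we must justify that the nebentypus of $h$ at $\fk{p}$ equals $\chi(\Norm\fk{p})$, with the correct normalization of the weight, so that the quadratic coefficient of the Hecke polynomial is $(\alpha\beta)^r$. We would read this off from the equality of Galois representations in the definition of base-change lift, comparing $\det\rho_{h,\fk{q}}(\Fr_{\fk{p}})$ with $\det A^r$. For the decomposition into irreducibles mentioned in the introduction, one would finally use $\rhoreg\cong\bigoplus_\rho \rho^{\dim\rho}$ together with additivity of Euler factors under direct sums.
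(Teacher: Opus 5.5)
Your proposal is correct and follows the paper's proof: you compare Euler factors at primes $p\nmid N$ unramified in $F$, use $C(\fk{p}) = \Tr(\rho_{f,\fk{q}}(\Fr_p)^r)$ from Theorem \ref{thm:hecke eigenvalues of a base-change lift}, and note that $\rhoreg(\Fr_p)$ has the $r$-th roots of unity as eigenvalues, each with multiplicity $g$, so that $\prod_{\zeta^r=1}(1-\zeta Y)=1-Y^r$ reduces the right-hand side to the $g$-th power of the local factor of $h$. Your write-up is slightly more careful than the paper's in three places: you justify the quadratic coefficient $\psi(\fk{p})N\fk{p}^{k-1}=(\alpha\beta)^r$ through determinants, you keep the single variable $X=p^{-s}$ throughout, and you treat $r=1$ uniformly rather than as a separate case.
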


A similar theorem was stated earlier in the special case of $L$-functions of base-change lifts for totally real extensions of prime degree and with trivial level by Saito in \cite{saito1976lifting}, where the proof is omitted. Later Arthur and Clozel \cite{arthurclozel} proved an analogous factorization for the Artin $L$-functions of base-changed automorphic representations to any cyclic extension $F / L$ of prime degree. Hence, Theorem \ref{thm: factorization of L(h,s)} is a generalization of the aforementioned results \cite{saito1976lifting, arthurclozel} to any totally real Galois number fields $F$. Moreover, we give an elementary and constructive proof.

\begin{remark}
    Note that in the case that $F / \Q$ is abelian, the theorem above reduces to a a factorization over the Dirichlet characters $\chi_i$ with $i = 0, \dots, n-1$ associated to $F$, that is,
    \begin{align*}
        L(h, s) = \prod_{i=0}^{n-1} L(f, s, \chi_i),
    \end{align*}
    where
    \begin{align*}
        L(f, s, \chi_i) := \sum_{n = 1}^\infty \chi_i(n) a(n) n^{-s}
    \end{align*}
    is the twisted $L$-function of $f$.
\end{remark}

\begin{proof}
    We will denote the Galois representations associated to $f$ by $\rho_{f, \fk{q}}$ for some prime $\fk{q}$ not in the level. Let us take a rational prime $p \nmid N$ such that $p$ is not ramified at $F$. We write $p \O_F = \fk{p}_1 \dots \fk{p}_g$ for the prime factors above $p$ and $r = f(\fk{p} \mid p)$ by $r$ for the residual degree. The inverse of the local Euler factor above $p$ on the left-hand side of \eqref{eq: nonabelian as tensor with regular rep} is
    \begin{align*}
        (1 - C(\fk{p})p^{-rs} + \psi(\fk{p}) p^{-r(k-1-2s)})^g,
    \end{align*}
    since there are $g$ distinct primes $\fk{p} \mid p$ of norm $p^r$.

    To simplify the notation, we use the change of variables $x = p^{-rs}$. The corresponding local factor on the right-hand side of \eqref{eq: nonabelian as tensor with regular rep} is given by
    \begin{align*}
        \det(I - x (\rhoreg \otimes \rho_{f,\fk{q}})(\Fr_p)) &= \det(I - x\,  \rhoreg(\Fr_p) \otimes \rho_{f,\fk{q}}(\Fr_p)) \\
        &= \prod_{i=1}^{2} \prod_{j=1}^{n}(1 - \lambda_i \mu_j x),
    \end{align*}
    where $\mu_j$ denote the eigenvalues of $\rhoreg(\Fr_p)$ and $\lambda_i$ those of $\rho_{f,\fk{q}}(\Fr_p)$. 
    
    Note that since $\rhoreg$ is the regular representation of $\Gal(F / \Q)$ and $\Fr_p$ has order $r$ as an element of $\Gal(F / \Q)$, we know that $\mu_j = \zeta_r^{j}$ with $j = 1, \dots, n-1$ when $\rhoreg(\Fr_p) \neq 1$. Thus, in the case $\rhoreg(\Fr_p) \neq 1$, the eigenvalue $\mu_j$ has algebraic multiplicity $n / r = g$. Therefore, the expression above simplies to
    \begin{align*}
        \prod_{i=1}^{2} \prod_{j=1}^{n}(1 - \lambda_i \mu_j) &= \prod_{i=1}^{2} \prod_{j=1}^{r}(1 - \lambda_i \mu_j x)^g \\
        &= \prod_{i=1}^{2} \prod_{j=1}^{r}(1 - \zeta_r^j \lambda_i x)^g \\
        &= \prod_{i=1}^2 (1 - (\lambda_i x)^r)^g \\
        &= \left(1 - (\lambda_1^r + \lambda_2^r)x^r + (\lambda_1\lambda_2)^r x^{2r}\right)^g \\
        &= \left(1 - (\Tr(\rho_{f,\fk{q}}(\Fr_p^r)))x^r + (\det(\rho_{f,\fk{q}}(\Fr_p)))^r x^{2r}\right)^g.
    \end{align*}
    Now taking $g$-th roots of the left and right-hand sides and using $C(\fk{p}) = \Tr(\rho_{f,\fk{q}}(\Fr_p^r))$ from Theorem \ref{thm:hecke eigenvalues of a base-change lift} yields the desired result.

    In the case that $\rhoreg(\Fr_p) = 1$, that is, when $\Fr_p = 1$, we have $r = 1$ and $g = n$, so $p$ totally splits in $F$. For the Hecke eigenvalue of $h$ at $p$, we get $C(\fk{p}) = a(p)$, and the eigenvalues $\mu_j = 1$ for all $j = 1, \dots, n$. The computation from above now becomes
    \begin{align*}
        \prod_{i=1}^{2} \prod_{j=1}^{n}(1 - \lambda_i \mu_j) &= \prod_{i=1}^{2} \prod_{j=1}^{r}(1 - \lambda_i \mu_j x)^g \\
        &= \prod_{i=1}^{2}(1 - \lambda_i x)^g \\
        &= (1 - (\lambda_1 + \lambda_2)x + \lambda_1 \lambda_2 x^2)^g.
    \end{align*}
    Taking $g$-th roots gives the desired equalities $C(\fk{p}) = a(p)$ and $\psi(\fk{p}) = \chi(p)p^{k-1}$. This concludes the proof.
\end{proof}

\begin{remark}
    By using the decomposition of the regular representation $\rho$ into irreducible representations
    \begin{align*}
        \rhoreg = \bigoplus_{\rho \text{ irreducible}} \rho^{\dim \rho},
    \end{align*}
    and the additivity property of Artin $L$-functions, we see that
    \begin{align*}
        L(h,s) = L(\rho_{f,\fk{q}} \otimes \rhoreg, s) = \prod_{\rho \text{ irreducible}} L(\rho_{f, \fk{q}} \otimes \rho, s)^{\dim \rho}.
    \end{align*}
    This is the factorization mentioned in the introduction.
\end{remark}

\section{Base-change lifts of Hida families}
\label{sec: base-change lifts of Hida families}
We start by recalling the definition of \emph{$p$-ordinary classical modular forms}.
\begin{definition}
Let $f \in S_k(\Gamma_1(N), \chi)$ be a classical newform with Hecke eigenvalues $a_n(f)$. We say that \emph{$f$ is $p$-ordinary} if the Hecke characteristic polynomial at $p$,
\begin{align*}
    X^2 - a_p(f)X + \chi(p)p^{k-1},
\end{align*}
has at least one root that is a unit modulo $p$.
\end{definition}
Note that a modular form $f$ being $p$-ordinary is equivalent to requiring that the Hecke eigenvalue $a_p$ is a unit modulo $p$.

Fix a prime $p \nmid N$ and a $p$-ordinary Hecke newform $f \in S_k(\Gamma_1(N),\chi)$ with eigenvalues $a_n(f)$. Let $\alpha$ and $\beta$ denote the two roots of the Hecke characteristic polynomial of $f$ at $p$, chosen so that $\mathrm{ord}_p(\alpha) = 0$ and $\mathrm{ord}_p(\beta) = k-1$. 

\begin{definition} 
The \emph{$p$-ordinary stabilisation of a $p$-ordinary newform $f$} is the modular form whose $q$-expansion is given by
$$
f^{(p)}(q) := f(q) - \beta f(q^p).
$$
Notice that $f^{(p)}\in S_k(\Gamma_1(N)\cap\Gamma_0(p))$.
\end{definition}

Set $\Gamma = 1 + pN\mathbb{Z}_p$ and denote by $\Lambda = \mathbb{Z}_p[[\Gamma]]$ the completed group ring of $\Gamma$. Define the \emph{space of weights} as
$$
\Omega := \mathrm{Hom}_{\Z_p\text{-alg}}(\Lambda, \mathbb{C}_p)\cong \mathrm{Hom}_{\text{cts}}(\Gamma, \mathbb{C}_p^*).
$$
Define also the subset of \emph{classical characters of $\Omega$} as
$$
\Omega^{\cl} := \{\gamma \mapsto \gamma^k : k \in \mathbb{Z}_{\geq 2} \},
$$
where $\gamma = 1 + p$ is a topological generator of $\Gamma$.

For any finite flat extension $\Lambda'$ of $\Lambda$, let us define $\Omega' := \mathrm{Hom}(\Lambda',\O)$, endowed with a projection $\kappa:\Omega'\to\Omega$ induced by the ring inclusion $\Lambda\hookrightarrow\Lambda'$.

\begin{definition}
[Darmon--Rotger \cite{dr} p.~803]  A \emph{Hida family of tame level $N$} is a quad\-ruple $\mathbf{f}=(\Lambda_{\mathbf{f}},\Omega_{\mathbf{f}},\Omega_{\mathbf{f}}^{\cl},\mathbf{f}(q))$ such that
\begin{itemize}
\item[(a)] $\Lambda_{\mathbf{f}}$ is a finite flat extension of $\Lambda$,
\item[(b)] $\Omega_{\mathbf{f}}$ is a nonempty open subset in $X_{\mathbf{f}} := \mathrm{Hom}(\Lambda_{\mathbf{f}}, \mathbb{C}_p)$ and $\Omega_{\mathbf{f}}^{\cl}$ is a $p$-adically dense subset of $\Omega_{\mathbf{f}}$ whose image under $\kappa$ satisfies $\kappa(\Omega_{\mathbf{f}}^{\cl})\subseteq\Omega^{\cl}$, and
\item[(c)] $\displaystyle \mathbf{f} = \sum_{n\geq 1}\mathbf{a}_nq^n\in\Lambda_{\mathbf{f}} [[q]]$ is a formal $q$-series such that, for all $x\in\Omega_{\mathbf{f}}^{\cl}$, the weight $\kappa(x)$ specialization
$$
f^{(p)}_x:= \sum_{n=1}^{\infty}a_n(x)q^n
$$
is the $q$-expansion of the ordinary $p$-stabilization of a normalised newform $f_x$ of weight $\kappa(x)$ on $\Gamma_1(N)$. Notice that $f_x^{(p)}$ is a Hecke eigenform for $\Gamma_1(N)\cap\Gamma_0(p)$.
\end{itemize}
\label{hidafamily}
\end{definition}

As proved in \cite[Corollary 3.5]{hidaprevious}, such a Hida family is associated with the unique ring homomorphism $\lambda:h^{\operatorname{ord}}(N,\mathbb{Z}_p)\to \Lambda_f$, where $h^{\operatorname{ord}}(N,\mathbb{Z}_p)$ is the ordinary big Hecke algebra of level $N$ and $\lambda(T_n)=\mathbf{a}_n$ (see \cite[p.~297]{hidaprevious}). As we will justify next, Definition \ref{hidafamily} is equivalent to the following:

\begin{definition}
A \emph{Hida family of $p$-adic Galois representations} is a continuous Galois representation $\rho_{\lambda} : G_{\mathbb{Q}} \to \GL_2(\mathcal{K})$, where $\mathcal{K}$ is the field of fractions of a finite flat extension $\Lambda'$ of $\Lambda$ such that for each classical point $P_{k,\chi} \in \Spec(\Lambda')$, $\rho_\lambda \equiv \rho_{f_{k,\fk{p}}}\bmod{P_{k,\chi}}$ with $f_k\in S_k(N,\chi)$ a Hecke newform and $\fk{p} \in \mathcal{O}_{K_f}$ is a prime above $p$.
\label{hidareps}
\end{definition}

Indeed, due to Chebotarev Density Theorem, the set of Frobenius conjugacy classes is dense in $G_{\mathbb{Q}}$ with respect to the profinite topology. Hence, $\rho_{\lambda}$ is determined by its image at $\Fr_\ell$ for all primes $\ell \nmid pN$. In particular,
$$
\Tr(\rho_{\lambda}(\Fr_\ell)) \equiv a_\ell(f_k) \bmod{P_{k,\chi}}\mbox{ and } \det(\rho_{\lambda}(\Fr_\ell)) \equiv \chi(\ell)\ell^{k-1} \bmod{P_{k,\chi}}.
$$
Hence the elements $\Tr(\rho_{\lambda}(\Fr_\ell))=\mathbf{a}_\ell\in\Lambda'$ determine a Hida family in the sense of Definition \ref{hidafamily}.

One of the main features of Hida families is the following well-known result:
\begin{theorem}[Darmon--Rotger \cite{dr} p.~803] For any classical $p$-ordinary newform $f \in S_k(N, \chi)$, there exists a Hida family $(\Lambda_{\mathbf{f}},\Omega_{\mathbf{f}},\Omega_{\mathbf{f}}^{\cl},\mathbf{f}(q))$ of tame level $N$ which specializes to $f$ at weight $k$, namely, there exists a classical point $x_k\in\Omega_{\mathbf{f}}^{\cl}$ with $\kappa(x_k)=k$ such that $f_{x_k}= f$.
\end{theorem}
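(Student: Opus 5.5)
This is Hida's theorem on the existence of ordinary families, phrased in the language of Darmon--Rotger, and I would prove it by passing through the ordinary big Hecke algebra $h^{\operatorname{ord}}(N,\mathbb{Z}_p)$ recalled after Definition \ref{hidafamily}.

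\textbf{Step 1: stabilise $f$ and obtain an ordinary eigensystem.} Since $f$ is $p$-ordinary and $p \nmid N$, form the ordinary $p$-stabilisation $f^{(p)} = f(q) - \beta f(q^p)$. This is an eigenform for $\Gamma_1(N)\cap\Gamma_0(p)$ with $U_p$-eigenvalue the unit root $\alpha$. Hence $f^{(p)}$ lies in the image of Hida's ordinary projector $e=\lim U_p^{n!}$. Its Hecke eigenvalues define a $\Z_p$-algebra homomorphism $\lambda_f : h^{\operatorname{ord}}(N,\Z_p)\otimes_{\Lambda} \Lambda/P_k \to \overline{\Q}_p$, where $P_k$ is the kernel of the weight-$k$ character $\gamma\mapsto\gamma^k$.

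\textbf{Step 2: invoke Hida's control and finiteness theorems.} First, $h^{\operatorname{ord}}(N,\Z_p)$ is finite and free over $\Lambda$. Second, for each $k\geq 2$, $h^{\operatorname{ord}}/P_k h^{\operatorname{ord}}$ is the ordinary Hecke algebra of weight $k$ and level $Np$. By the control theorem, $\lambda_f$ is a point of $\Spec h^{\operatorname{ord}}$ above $P_k$. Choose a minimal prime (an irreducible component) of $h^{\operatorname{ord}}\otimes\overline{\Q}_p$ through this point. Let $\Lambda_{\mathbf f}$ be the integral closure of its quotient in a suitable finite extension; it is finite flat over $\Lambda$. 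Then set $\mathbf a_n$ to be the image of $T_n$ and $\mathbf f=\sum \mathbf a_n q^n$.

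\textbf{Step 3: choose $\Omega_{\mathbf f}$ and $\Omega^{\cl}_{\mathbf f}$ and check the conditions of Definition \ref{hidafamily}.} Take $\Omega_{\mathbf f}$ to be a small open neighbourhood of $x_k$ in $X_{\mathbf f}$ on which $\kappa$ is étale. This is possible because $\lambda_f$ corresponds to a $p$-stabilised newform, so the point is étale over weight space by Hida's results. Let $\Omega^{\cl}_{\mathbf f}$ be the points of $\Omega_{\mathbf f}$ lying over classical weights $k'\geq 2$ with trivial wild character. These are dense because $\mathbb{Z}_{\geq 2}$ is dense in the weight disc and $\kappa$ is étale near $x_k$. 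By the control theorem, the specialisation at any $x\in\Omega^{\cl}_{\mathbf f}$ is a classical ordinary eigenform of weight $\kappa(x)$ and level $Np$.

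\textbf{Main obstacle.} The delicate step is showing that each such specialisation is the ordinary stabilisation of a \emph{newform} of level $N$, and not a $p$-new form or an old form of smaller level. I would first restrict to weights $k'\geq 3$: there, a $p$-new ordinary form would need $a_p^2=\chi(p)p^{k'-2}$, which is not a unit, so every ordinary form is $p$-old. For $N$-newness I would use the fact that the family passes through the newform $f$, so its component lies in the $N$-new quotient of $h^{\operatorname{ord}}$. Alternatively, constancy of the tame conductor along a component (via the associated $\Lambda$-adic Galois representation of Definition \ref{hidareps}) shows that all specialisations are $N$-new. Finally, $f_{x_k}=f$ holds by construction, which proves the theorem.
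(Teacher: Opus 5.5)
The paper gives no proof of this statement; it quotes it from Darmon--Rotger, who attribute it to Hida. Your outline is the standard argument behind that citation: ordinary stabilisation, Hida's control theorem, an $N$-new irreducible component of $h^{\operatorname{ord}}$ through $\lambda_f$, and étaleness at $x_k$. However, one step fails as written.

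\textbf{The choice of classical points.} You take $\Omega^{\cl}_{\mathbf f}$ to be all points of $\Omega_{\mathbf f}$ over weights $k'\geq 2$, later $k'\geq 3$. Your $p$-oldness argument uses $a_p^2=\chi(p)p^{k'-2}$, which presupposes that the weight-$k'$ specialisation has nebentypus $\chi$, unramified at $p$. That is false on most of the component.

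\begin{itemize}
\item For $p$ odd, $h^{\operatorname{ord}}$ splits according to the diamond action of $\mu_{p-1}\subset\Z_p^\times$, so the component through $\lambda_f$ lies in a single isotypic piece.
\item Consequently its specialisation at $\gamma\mapsto\gamma^{k'}$ has nebentypus $\chi\,\omega^{k-k'}$, where $\omega$ is the Teichm\"uller character.
\item For $k'\not\equiv k \pmod{p-1}$ this character has conductor divisible by $p$. The specialisation is then an ordinary eigenform that is new at $p$; there $|a_p|=p^{(k'-1)/2}$, so ordinarity is no obstruction.
\item Such a form is not the ordinary $p$-stabilisation of a newform of level $N$, so condition (c) of Definition \ref{hidafamily} fails at those points.
\end{itemize}

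For instance, the family through $\Delta$ at $p=11$ specialises in weight $3$ to the form spanning $S_3(\Gamma_1(11),\omega^{-1})$.

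\textbf{The fix.} Take $\Omega^{\cl}_{\mathbf f}$ to be the points over weights $k'\geq 3$ with $k'\equiv k\pmod{p-1}$. These are still dense, because that arithmetic progression is $p$-adically dense in $\Z_p$ and $k'\mapsto(\gamma\mapsto\gamma^{k'})$ is continuous. For these weights your $a_p^2=\chi(p)p^{k'-2}$ argument is correct.

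You must also adjoin $x_k$ itself. When $k=2$, your restriction to $k'\geq 3$ would discard the very point the theorem requires. Including it is harmless, since its specialisation is $f^{(p)}$ by construction.

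Your $N$-newness step is fine, but it needs one sentence of justification. By strong multiplicity one, no component coming from a tame level $M\mid N$ with $M\neq N$ can contain $\lambda_f$.
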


Next, we describe the constructions above generalized to Hilbert cusp forms. Let $F$ be a totally real number field and denote by $\fk{d}_F$ its different ideal. Let $h\in S_k(\fk{n}, \psi)$ be a normalized Hilbert newform over $F$ of parallel weight $k$. Thus, $T_{\fk{a}}h = C(\fk{a}) h$ for each integral ideal $\fk{a} \subseteq \O_F$. Let $K_h$ denote the number field generated by the set of Hecke eigenvalues of $h$ and denote by $\O_{K_h}$ its ring of integers. 
\begin{definition}
Let $p$ be a rational prime coprime with the level of $h$ and $\fk{p}$ a prime of $\O_F$ over $p$. We say that $h$ is \emph{nearly ordinary at $\fk{p}$} or \emph{$\fk{p}$-nearly ordinary} if $C(\fk{p})$ is a unit modulo $\fk{p}$. We say that $h$ is \emph{$p$-ordinary} if it is nearly ordinary at each prime $\fk{p} \subseteq \O_F$ above $p$.
\end{definition}

Like in the case of classical cusp forms, we have the following definition.
\begin{definition}
Let $h \in S_k(\fk{n}, \psi)$ be a Hilbert newform of parallel weight $k$, level $\fk{n}$ and nebentypus character $\psi$. Fix a prime $p$ coprime with $\fk{n}$ and let $\fk{p}$ be a prime of $\O_F$ over $p$, and suppose that $h$ is nearly ordinary at $\fk{p}$. Consider the root $\beta$ of the Hecke polynomial $X^2 - C(\fk{p})X + \psi(\fk{p})p^{k-1}$ such that $\beta \equiv 0 \bmod{\fk{p}}$. The \emph{ordinary $\fk{p}$-stabilization of $h$} is defined as
$$
h^{(\fk{p})} := h - \beta V_{\fk{p}}h,
$$
where the \emph{Verschiebung operator} $V_{\fk{p}}$ (the dual of Frobenius) is defined by
$$
V_{\fk{p}}h:=\sum_{\substack{\nu \in \O_F^* \\ \fk{p} \nmid \nu \\ \nu \gg 0}} a_{\nu}(h)q^{\nu}.
$$
If $h$ is $p$-ordinary, then the \emph{ordinary $p$-stabilization} is defined as in the case of classical modular forms as
$$
h^{(p)}(z) := h(z) - \beta h(pz),
$$
where $\beta$ is the non-unit root for the Hecke polynomial for $p$.
\end{definition}

Next, we fix finite a extension $\mathcal{O}$ of $\mathbb{Z}_p$. Let now $\Gamma := 1 + p^e\mathbb{Z}_p$, where $e:=[F\cap\mathbb{Q}_{\infty}:\mathbb{Q}]$ and $\mathbb{Q}_{\infty}$ is the cyclotomic $\mathbb{Z}_p$-extension of $\mathbb{Q}$. Let $u := (1+p)^{p^e}$ be a topological generator of $\Gamma$ and now set $\Lambda := \mathcal{O}[[T]]\cong \mathcal{O}[[\Gamma]]$, where $T$ is undetermined. Define again the \emph{space of weights} as
$$
\Omega := \mathrm{Hom}_{\O\text{-alg}}(\Lambda, \mathbb{C}_p)\cong \mathrm{Hom}_{\text{cts}}(\Gamma, \mathbb{C}_p^*).
$$
For $k \geq 2$ and a $p$-power root $\zeta$ of $1$, define the classical character $\mu_{k,\zeta}$ by setting $\mu_{k,\zeta}(1+T) := \zeta u^{k-2}$ and extending it by linearity to the full group ring. The subset of classical characters of $\Omega$ then becomes
$$
\Omega^{\cl} := \{\mu_{k,\zeta} :  k \geq 2, \ \zeta \ \mbox{is a $p$-power root of unity}\}.
$$

For any finite flat extension $\Lambda'$ of $\Lambda$, let us define $\Omega' := \mathrm{Hom}(\Lambda',\mathbb{C}_p)$, endowed with a projection $\kappa: \Omega' \to \Omega$ induced by the ring inclusion $\Lambda \hookrightarrow \Lambda'$.

The definition of Hida families generalizes to $\fk{p}$-nearly ordinary Hilbert modular forms:
\begin{definition}[Wiles \cite{wiles1988} p.~552]
A \emph{$\fk{p}$-nearly ordinary Hida family} of Hilbert cups forms of tame level $\fk{n}$ is a quadruple $\mathbf{h} = (\Lambda_{\mathbf{h}},\Omega_{\mathbf{h}},\Omega_{\mathbf{h}}^{\cl},\mathbf{h}(q))$ such that
\begin{itemize}
\item[(a)] $\Lambda_{\mathbf{h}}$ is a finite flat extension of $\O_{F,\fk{p}}[[T]]$,
\item[(b)] $\Omega_{\mathbf{h}}\neq \emptyset$ is an open subset of $\Omega$, 
\item[(c)] $\Omega_{\mathbf{h}}^{\cl}\subseteq \Omega_{\mathbf{h}}\cap\kappa^{-1}(\Omega^{\cl})$ is a dense subset in $\Omega_{\mathbf{h}}$,
\item[(d)] $\displaystyle \mathbf{h}= \sum_{\substack{\nu \in \fk{d}_F \\
\nu \gg 0}} \mathbf{a}_{\nu} q^{\nu} \in \Lambda_{\mathbf{h}} [[q^{\nu}]]$ is a formal $q$-series such that for all $\mu_{k,\zeta}\in \Omega_{\mathbf{h}}^{\cl}$ the specialization
$$
h_k^{(\fk{p})}(q) := \sum_{\substack{\nu \in \fk{d}_F \\
\nu \gg 0}} \mu_{k,\zeta}(a_{\nu})q^{\nu}
$$
is the $q$-expansion of a $\fk{p}$-nearly ordinary Hilbert modular form $h_k$ of parallel weight $k$ and level $\fk{n}$ defined over $\mathcal{O}[\zeta]$.
\end{itemize}
\label{hidafamilyhilbert}
\end{definition}

Since we will only deal with the case $\zeta = 1$, we make the simplifying assumption that $\zeta = 1$.

For $k \geq 2$, let $\tilde{P}_{k}$ be a prime ideal of $\Lambda'$ over $P = (T + 1 - u^{k-2})$. We have that
\begin{lemma}[Wiles \cite{wiles1988} p.~545] Condition (d) in Definition \ref{hidafamilyhilbert} is equivalent to $\mathcal{O}_{K_{h_k}} \subseteq \Lambda'/\tilde{P}_k$ and $\mathbf{h}\equiv h_k^{(\fk{p})} \bmod{\tilde{P}_k}$.
\end{lemma}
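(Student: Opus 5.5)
The plan is to prove the two implications separately. The key point is that, for an $\O$-algebra homomorphism $x:\Lambda'\to\mathbb{C}_p$ lying over $\mu_{k,1}$, reducing modulo $\tilde{P}_k=\ker(x)$ and evaluating at $x$ are the same operation. I fix the dictionary first. A point $x\in\Omega'$ with $\kappa(x)=\mu_{k,1}$ is a continuous $\O$-algebra map $x:\Lambda'\to\mathbb{C}_p$ sending $1+T\mapsto u^{k-2}$, so $P=(T+1-u^{k-2})\subseteq\ker x$. Since $\Lambda'/\ker x$ embeds in $\mathbb{C}_p$, it is a domain, and $\ker x$ is prime. Because $\Lambda'$ is finite flat over $\Lambda$, the quotient $\Lambda'/\ker x$ is finite over $\Lambda/P\cong\O$. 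Hence $\ker x$ is a prime lying over $P$, i.e.\ a $\tilde{P}_k$. Conversely, for every prime $\tilde{P}_k$ over $P$, the quotient $\Lambda'/\tilde{P}_k$ is a domain finite over $\O$, so it is an order in a finite extension of $\mathrm{Frac}(\O)$. Fixing an embedding into $\mathbb{C}_p$ therefore produces such a point $x$. Under this correspondence the specialization $\mu_{k,1}(\mathbf{a}_\nu)$, more properly $x(\mathbf{a}_\nu)$, is exactly the image of $\mathbf{a}_\nu$ in $\Lambda'/\tilde{P}_k\hookrightarrow\mathbb{C}_p$.

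For (d) $\Rightarrow$ the congruence, I take $\tilde{P}_k=\ker x$ for a classical point $x$. Reducing $\mathbf{h}=\sum\mathbf{a}_\nu q^\nu$ coefficientwise modulo $\tilde{P}_k$ gives $\sum x(\mathbf{a}_\nu)q^\nu=h_k^{(\fk{p})}$, which is the congruence $\mathbf{h}\equiv h_k^{(\fk{p})}\bmod\tilde{P}_k$. For the inclusion $\O_{K_{h_k}}\subseteq\Lambda'/\tilde{P}_k$, I would first note that the Hecke eigenvalues $C(\fk{a})$ of $h_k$ are recovered from the Fourier coefficients of the normalized eigenform. For $\fk{a}$ prime to $\fk{p}$ they are Fourier coefficients of $h_k^{(\fk{p})}$. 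At $\fk{p}$ the $\fk{p}$-stabilized coefficient is the unit root $\alpha$, and $C(\fk{p})=\alpha+\beta$ with $\alpha\beta=\psi(\fk{p})p^{k-1}$. So all eigenvalues lie in the ring $R$ generated over $\O$ by the images of the $\mathbf{a}_\nu$ (together with $\alpha^{-1}$, a unit), and $R\subseteq\Lambda'/\tilde{P}_k$. The eigenvalues are algebraic integers, so $\O_{K_{h_k}}$ maps into the $p$-adic completion of $R$.

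This last step is the main obstacle. An honest inclusion of the full ring of integers needs $\Lambda'/\tilde{P}_k$ to be integrally closed. I would handle this by reading the statement, as Wiles does, with $\Lambda'$ replaced by its normalization, or equivalently with the inclusion meant as an inclusion of the $\fk{p}$-adic completion. In that setting $\Lambda'/\tilde{P}_k$ is the ring of integers of a finite extension of $\mathrm{Frac}(\O)$ containing all eigenvalues, and the inclusion follows.

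For the converse, I assume that for every $k$ and every $\tilde{P}_k$ over $P$ we have $\O_{K_{h_k}}\subseteq\Lambda'/\tilde{P}_k$ and $\mathbf{h}\equiv h_k^{(\fk{p})}\bmod\tilde{P}_k$. The first condition lets me read the coefficients of $h_k^{(\fk{p})}$ inside $\Lambda'/\tilde{P}_k$, so the congruence is meaningful. Composing with the embedding attached to $x$, the congruence says precisely $x(\mathbf{a}_\nu)=a_\nu(h_k^{(\fk{p})})$ for all $\nu$, which is condition (d) at $x$. Since the correspondence $x\leftrightarrow\tilde{P}_k$ is surjective onto classical points, (d) holds on all of $\Omega_{\mathbf{h}}^{\cl}$.
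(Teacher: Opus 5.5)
Your dictionary between points $x$ over $\mu_{k,1}$ and primes $\tilde P_k$ over $P$ is correct, and so is your translation of $\mathbf h\equiv h_k^{(\fk p)}\bmod\tilde P_k$ into the specialization condition of (d). The paper gives no argument beyond the citation to Wiles, so this is the substance. The gap is the inclusion $\O_{K_{h_k}}\subseteq\Lambda'/\tilde P_k$, and neither of your two steps for it works.

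First, knowing that the eigenvalues $C(\fk a)$ lie in $R$ only shows that $R$ contains the $\O$-algebra they generate, i.e.\ a Hecke order, which can be non-maximal at $p$. The eigenvalues being algebraic integers does not put all of $\O_{K_{h_k}}$ into it. ``Passing to the $p$-adic completion'' changes nothing, because $R$ is finite over $\O$ and hence already complete.

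Second, normalizing $\Lambda'$ does not make the fibres $\Lambda'/\tilde P_k$ normal: normality of a two-dimensional ring does not descend to its quotients by height-one primes. For example, $\Lambda'=\Z_p[[X]]$ with $T\mapsto X^2$ is regular and finite free over $\Z_p[[T]]$. Suppose $u^{k-2}-1=p^{2m}v$ with $m\ge1$ and $v$ a non-square unit, which happens for infinitely many $k$ when $p$ is odd. Then $\Lambda'/\tilde P_k=\Z_p[[X]]/(X^2-p^{2m}v)\cong\Z_p[p^m\sqrt v]$, a non-maximal order in the unramified quadratic extension of $\Q_p$. So your claim that after normalization $\Lambda'/\tilde P_k$ is a ring of integers is false, and your ``completion'' reading fails for the same reason.

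What closes the gap is the clause in (d) that $h_k$ is defined over $\O[\zeta]=\O$. Then every $x(\mathbf a_\nu)$ lies in $\O$, and by your stabilization relations so does every Hecke eigenvalue of $h_k$. Since $\O$ is integrally closed and $\mathrm{Frac}(\O)\supseteq K_{h_k}$, you get $\O_{K_{h_k}}\hookrightarrow\O$. Finally $\O\hookrightarrow\Lambda'/\tilde P_k$, because $\tilde P_k\cap\Lambda=P$ and $\Lambda/P\cong\O$.

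If you do not want to rely on that clause (it sits uneasily with allowing a nontrivial $\Lambda'$), the inclusion has to be read in one of two ways. Either it means containment of the Hecke order, which your $R$-argument does prove. Or $\Lambda'/\tilde P_k$ is replaced by its own integral closure, whose fraction field contains $K_{h_k}$. Normalizing $\Lambda'$ is the wrong operation either way.

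Two smaller points:
\begin{itemize}
\item Putting $\alpha^{-1}$ into $R\subseteq\Lambda'/\tilde P_k$ needs a remark: $\Lambda'/\tilde P_k$ is integral over $\O$, so an element invertible in its integral closure (a DVR, where $\alpha$ has valuation $0$) is already invertible in it.
\item For a genuine equivalence both directions should range over the same primes, namely those coming from $\Omega_{\mathbf h}^{\cl}$. Your converse assumes the congruence at every prime over every $P$, which the forward direction does not provide.
\end{itemize}
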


Again, by \cite[Corollary 2.5]{hida}, a Hida family of $\fk{p}$-nearly ordinary Hilbert modular forms is determined by a ring homomorphism $\lambda : h^{n,\operatorname{ord}}(\fk{n},\O_{F,\fk{p}}) \to \Lambda'$, where $h^{n,\operatorname{ord}}(\fk{n},\O_{F,\fk{p}})$ is the nearly ordinary Hecke algebra of level $\fk{n}$ and $\lambda(T_{\nu}) = \mathbf{a}_{\nu}$ (see \cite[p.~150]{hida}). Thus, taking again into account that the Frobenius elements $\{\Fr_{\fk{L}} : \fk{L} \nmid \fk{p}\fk{n}\}$ are dense in $G_{F}$, Definition \ref{hidafamilyhilbert} is equivalent to:

\begin{definition} 
Given a totally real number field $F$ and a prime ideal $\fk{p}$ of $\O_F$, \emph{a Hida family of $\fk{p}$-adic Galois representations of $G_F$} is a continuous Galois representation $\rho_{\lambda}: G_F \to \GL_2(\mathcal{K})$ where $\mathcal{K}$ is the field of fractions of a finite flat extension $\Lambda'$ of $\mathcal{O}_{F,\fk{p}}[[\Gamma]]$ such that for each classical point $P_{k,\psi} \in \Spec(\Lambda')$, we have $\rho_{\lambda} \equiv \rho_{h_{k,\fk{P}}} \bmod{P_{k,\psi}}$ with $h_k\in S_{k}(\fk{n},\psi)$ a Hilbert cuspidal newform of parallel weight $k$ and $\fk{P}$ a prime ideal of $\Lambda' / P_{k,\psi}$ over $\fk{p}$.
\end{definition}

We can recover the Fourier coefficients at primes $\nu \nmid \fk{pn}$ as
$$
\mathbf{a}_{\nu} := \Tr(\rho_{\lambda}(\Fr_{\nu})).
$$

\begin{theorem}[Wiles \cite{wiles1988} Theorem 1.4.1, Hida \cite{hida} Theorem 2.4] For any normalized Hilbert newform $h \in S_{k}(\fk{n},\psi)$ over $F$ of parallel weight $k \geq 2$, there exists a nearly ordinary Hida family $(\Lambda_{\mathbf{h}}, \Omega_{\mathbf{h}}, \Omega_{\mathbf{h}}^{\cl}, \mathbf{h}(q))$ such that $h_{k}=h$.
\end{theorem}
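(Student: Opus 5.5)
This statement is the Hilbert-modular analogue of the Darmon--Rotger theorem. The paper attributes it to Wiles \cite{wiles1988} and Hida \cite{hida}, so the plan is to reconstruct the standard argument rather than invent a new one. \textbf{One caveat first.} The existence of a Hida family through $h$ needs $h$ to be nearly ordinary at the relevant prime $\fk{p}$ above $p$, with $p$ prime to $\fk{n}$, as in the definitions above. I read this as an implicit hypothesis of the statement. Without it there is nothing to interpolate: the classical specializations of a nearly ordinary family are themselves nearly ordinary.

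\textbf{Step 1: the big Hecke algebra is finite and flat over $\Lambda$.} Consider the nearly ordinary Hecke algebra $h^{n,\operatorname{ord}}(\fk{n},\O_{F,\fk{p}})$ acting on $p$-adic Hilbert cusp forms of tame level $\fk{n}$. I would apply Hida's ordinary projector $e=\lim_m T(\fk{p})^{m!}$ to the space of $p$-adic cusp forms. By Hida's control theorem, the specialization of this algebra at each arithmetic point $P_k=(T+1-u^{k-2})$ with $k\ge 2$ recovers the nearly ordinary part of the classical Hecke algebra of weight $k$ and level $\fk{n}\fk{p}$. The nearly ordinary part has bounded rank, independent of $k$. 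This step is the key one.

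\textbf{Step 2: pass to a component through $h$.} Since $h$ is nearly ordinary, its $\fk{p}$-stabilization $h^{(\fk{p})}=h-\beta V_{\fk{p}}h$ is an eigenform for the Hecke algebra at the point $P_k$. It therefore gives an $\O$-algebra homomorphism $h^{n,\operatorname{ord}}\otimes\Lambda/P_k\to \O_{K_h,\fk{P}}$. Using the finiteness from Step 1, I would lift this homomorphism to a minimal prime of $h^{n,\operatorname{ord}}$ contained in its kernel. Let $\Lambda_{\mathbf h}$ be the normalization of the quotient by that minimal prime; this is a finite flat extension of $\Lambda$, and we obtain a homomorphism $\lambda:h^{n,\operatorname{ord}}\to\Lambda_{\mathbf h}$.

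\textbf{Step 3: assemble the quadruple and check conditions (a)--(d).} Set $\mathbf a_\nu=\lambda(T_\nu)$, which defines $\mathbf h(q)$, and take $\Omega_{\mathbf h}$ to be a neighborhood of the point $x_k$ over $P_k$ on which $\kappa$ is étale. Conditions (a) and (b) are then immediate. For (c) and (d), the control theorem again shows that the specializations at the dense set of arithmetic points $\tilde P_{k'}$ are classical nearly ordinary eigenforms of weight $k'$. By the lemma of Wiles quoted above, this is exactly condition (d). At $x_k$ itself the specialization is $h^{(\fk{p})}$, hence $h_k=h$.

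\textbf{Expected difficulties.} The hard step is Step 1. The vertical control theorem requires Hida's comparison between cohomology with coefficients in weight $k$ and $\Lambda$-adic cohomology, and this is where the real work lies; I would simply cite \cite[Theorem 2.4]{hida} for it. A secondary subtlety is newness. The specializations at other weights might be old at primes dividing $\fk{n}$. To handle this I would restrict to the $\fk{n}$-new quotient of the Hecke algebra, which is still finite flat and still has the control property.
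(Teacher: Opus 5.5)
The paper gives no proof of this statement; it only cites Wiles and Hida. Your outline (control theorem, a minimal prime inside the kernel of the eigencharacter of the stabilized form, normalization) is the standard argument behind those citations. Your caveat is also right: as printed, the statement is false for non-ordinary $h$.

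The gap is in \emph{which} ordinarity you impose. You assume near-ordinarity only at the single prime $\fk p$, use the projector $\lim_m T(\fk p)^{m!}$, and stabilize only at $\fk p$. Suppose $p$ has a second prime $\fk p'\neq\fk p$ above it in $F$. Then Step 1, which you correctly identify as the key step, fails. Nothing controls the slope at $\fk p'$, and along parallel weights the weights at the embeddings through $\fk p'$ grow with $k$ as well. So the rank of the $T(\fk p)$-ordinary part in parallel weight $k$ is unbounded in $k$, and there is no finite $\Lambda$-module to which a control theorem could apply.

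What Wiles and Hida assume is ordinarity at \emph{every} prime of $F$ above $p$, i.e.\ for $T(p)$. This is what the paper calls $p$-ordinary, and it is what the paper actually checks for $h_{k_r}$ in its later application. Accordingly, the projector must be $e=\lim_m T(p)^{m!}$, and $h$ must be stabilized at every $\fk p\mid p$ before you take its eigencharacter. With that change, your Steps 2 and 3 go through.

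Some smaller points:
\begin{itemize}
\item You assert without argument that $\kappa$ is \'etale at $x_k$. It is true, but it needs control together with semisimplicity of the weight-$k$ ordinary Hecke action away from $\fk n$.
\item Flatness of $\Lambda_{\mathbf h}$ comes from normality. A two-dimensional normal domain is Cohen--Macaulay by Serre's criterion, hence free over the regular local ring $\Lambda$ by Auslander--Buchsbaum. So you never need flatness of the whole Hecke algebra, which you claim in Step 1 but do not use.
\item Passing to the $\fk n$-new quotient while keeping control is a real theorem, not a formality. It amounts to constancy of the prime-to-$p$ conductor at arithmetic points, e.g.\ via a purity argument.
\item For condition (d), take the dense set of weights to be $k'\ge 3$. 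At parallel weight $2$ a specialization can be Steinberg at a prime above $p$, rather than the stabilization of a form of level $\fk n$.
\end{itemize}
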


We propose the following definition for the base-change lift of a Hida family.
\begin{definition}
\label{basechangehida}
Let $F$ be a totally real field and $\rho_{\lambda}: G_{\mathbb{Q}} \to \GL_2(\mathcal{K})$ a Hida family of $p$-adic Galois representations of tame level $N$. Let $\fk{p}$ a prime ideal of $F$ above $p$ coprime to $N$ and $\fk{n}$ an ideal of $F$ above $N$. A \emph{base-change lift of $\rho_{\lambda}$ to $F$} is a Hida family of $\fk{p}$-adic Galois representations ${\rho_{\lambda,h}}: G_F \to \GL_2(\mathcal{K})$ such that for any prime $\fk{L}$ of $F$ above an unramified prime $\ell \nmid pN$, we have
\begin{align*}
    \rho_{\lambda}|_{G_F}(\Fr_{\fk{L}})={\rho_{\fk{\lambda},h}}(\Fr_{\fk{L}}).
\end{align*}
\end{definition}
At the level of formal $q$-expansions, Definition \ref{basechangehida} is equivalent to the following:

\begin{definition}
Let $\mathbf{h} = (\Lambda_{\mathbf{h}},\Omega_{\mathbf{h}},\Omega_{\mathbf{h}}^{\cl},\mathbf{h}(q))$ be a $\fk{p}$-nearly ordinary Hida family of Hilbert modular forms of tame level $\fk{n}$ and $\mathbf{f}=(\Lambda_{\mathbf{f}},\Omega_{\mathbf{f}},\Omega_{\mathbf{f}}^{\cl},\mathbf{f}(q))$ a Hida family of tame level $N$ with $\fk{n} \mid N$. We say that \emph{$\mathbf{h}$ is a base-change lift of $\mathbf{f}$ to $F$} if for any prime $\fk{L}$ of $F$ above an unramified prime $\ell \nmid pN$, we have
$$
\Tr(\rho_{f_x,\fk{p}}|_{D_{\fk{L}}}(\Fr_\fk{L}|_{D_{\fk{L}}})) = \Tr(\rho_{h_x,\fk{P}}(\Fr_{\fk{L}})) = \mu_{k,1}(\mathbf{a}_{\fk{L}}) = C(\fk{L},h_x),
$$
where $\fk{P}$ is a prime in $\mathcal{O}_{K_{h_x}}$ above $\fk{p}$.
\end{definition}

Now we can prove the main result of this section.
\begin{theorem} 
Let $F$ be a totally real field. Then every Hida family admits a base-change lift to $F$.
\end{theorem}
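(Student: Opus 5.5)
We construct the lift by hand, applying the formula of Theorem \ref{thm:hecke eigenvalues of a base-change lift} to the Iwasawa coefficients of $\mathbf{f}$. Let $\mathbf{f}=(\Lambda_{\mathbf{f}},\Omega_{\mathbf{f}},\Omega_{\mathbf{f}}^{\cl},\mathbf{f}(q))$ be a Hida family of tame level $N$ with coefficients $\mathbf{a}_n\in\Lambda_{\mathbf{f}}$, and let $\rho_\lambda: G_\Q\to\GL_2(\mathcal{K})$ be the attached big Galois representation from Definition \ref{hidareps}. There is a $\Lambda$-adic determinant character $\boldsymbol{\chi}(\ell)\langle\ell\rangle\in\Lambda_{\mathbf{f}}$ which specializes at a classical point $x$ of weight $k$ to $\chi(\ell)\ell^{k-1}$; write $\mathbf{d}_\ell:=\det\rho_\lambda(\Fr_\ell)$ for it. For each prime $\fk{L}$ of $F$ above a prime $\ell\nmid pN$ unramified in $F$, with residue degree $r=f(\fk{L}\mid\ell)$, we define
\begin{align*}
    \mathbf{c}_{\fk{L}} := \begin{cases} \mathbf{a}_\ell & r=1,\\ \mathbf{a}_{\ell^r}-\mathbf{d}_\ell\,\mathbf{a}_{\ell^{r-2}} & r\geq 2.\end{cases}
\end{align*}
These lie in $\Lambda_{\mathbf{f}}$. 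The recursion $\mathbf{a}_{\ell^{n}}=\mathbf{a}_\ell\mathbf{a}_{\ell^{n-1}}-\mathbf{d}_\ell\mathbf{a}_{\ell^{n-2}}$ holds in $\Lambda_{\mathbf{f}}$, because it holds after every specialization in the dense set $\Omega_{\mathbf{f}}^{\cl}$ and $\Lambda_{\mathbf{f}}$ is reduced and flat over $\Lambda$. Running the induction of Lemma \ref{lemma:trace_of_frobenius_power} with $A=\rho_\lambda(\Fr_\ell)$ then gives $\mathbf{c}_{\fk{L}}=\Tr(\rho_\lambda(\Fr_\ell)^r)=\Tr(\rho_\lambda|_{G_F}(\Fr_{\fk{L}}))$. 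The last equality uses the fact from the proof of Theorem \ref{thm:hecke eigenvalues of a base-change lift} that $\Fr_{\fk{L}}$ equals $\Fr_\ell^r$ up to inertia, and inertia acts trivially since $\rho_\lambda$ is unramified at $\ell$.

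On the Galois side, we set $\rho_{\lambda,h}:=\rho_\lambda|_{G_F}: G_F\to\GL_2(\mathcal{K})$. This is continuous, and by construction it satisfies the defining identity of Definition \ref{basechangehida}. It remains to check that it is a Hida family of $\fk{p}$-adic representations of $G_F$. Fix a classical point $P_{k,\chi}$. Reducing modulo $P_{k,\chi}$ gives $\rho_{f_k,\fk{p}}|_{G_F}$, and by Theorem \ref{thm: existance of base-change lifts to F} this is $\rho_{h_k,\fk{P}}$ for the base-change lift $h_k$ of $f_k$, a Hilbert newform of parallel weight $k$. Theorem \ref{thm:hecke eigenvalues of a base-change lift} shows that the specialization of $\mathbf{c}_{\fk{L}}$ at $x$ is exactly $C(\fk{L},h_k)$. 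So the formal series $\mathbf{h}$ built from the $\mathbf{c}_{\fk{L}}$ specializes, at the unramified good primes, to the eigenvalues of the base changes of the $f_x$. We then extend to all ideals by multiplicativity and the Hecke recursion, using $\boldsymbol{\psi}(\fk{L})=\mathbf{d}_\ell^r$. We take $\Lambda_{\mathbf{h}}$ to be the $\Lambda$-subalgebra of $\Lambda_{\mathbf{f}}$ generated by the $\mathbf{c}_{\fk{L}}$, base-changed along $\Z_p\to\O_{F,\fk{p}}$; this is finite and flat because it sits inside $\Lambda_{\mathbf{f}}\otimes\O_{F,\fk{p}}$. We take $\Omega_{\mathbf{h}}$ to be the image of $\Omega_{\mathbf{f}}$ and $\Omega^{\cl}_{\mathbf{h}}$ the image of $\Omega^{\cl}_{\mathbf{f}}$, which is dense by continuity.

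The main obstacle is ordinarity together with the precise matching of weight spaces. The Hilbert weight space uses $\Gamma=1+p^e\Z_p$ with $u=(1+p)^{p^e}$, while the classical one uses $1+pN\Z_p$. We must therefore check that the restriction map sends classical characters $\mu_{k,1}$ to classical characters of weight $k$; we will do this by identifying $\Lambda_F$ with a finite subring of $\Lambda$ via $u\mapsto\gamma^{p^e}$. For nearly ordinarity at $\fk{p}$, we need the $U_{\fk{p}}$-eigenvalue of $h_k$, which is $\alpha^{f(\fk{p}\mid p)}$ with $\alpha$ the unit root for $f_k$. We try first to show this by restricting the ordinary filtration: locally at $p$, $\rho_\lambda|_{D_p}$ has an unramified quotient on which $\Fr_p$ acts by $\mathbf{a}_p$. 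Restricting to $D_{\fk{p}}$ keeps this quotient unramified, with $\Fr_{\fk{p}}$ acting by $\mathbf{a}_p^{f(\fk{p}\mid p)}$, which is a unit at every classical point. So every specialization is $\fk{p}$-nearly ordinary, and we define $\mathbf{c}_{\fk{p}}:=\mathbf{a}_p^{f(\fk{p}\mid p)}$ to complete $\mathbf{h}$. Finally, by Hida's characterization (via the homomorphism $\lambda$ from the nearly ordinary Hecke algebra, \cite[Corollary 2.5]{hida}), a $\Lambda$-adic series whose dense set of classical specializations are nearly ordinary eigenforms defines a Hida family. This yields that $\mathbf{h}$ is a base-change lift of $\mathbf{f}$ to $F$.
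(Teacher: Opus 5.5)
Your proposal is correct and follows essentially the paper's route: you apply the formula of Theorem \ref{thm:hecke eigenvalues of a base-change lift} to the $\Lambda$-adic coefficients, identify the resulting $\mathbf{c}_{\fk{L}}$ with the Frobenius traces of $\rho_\lambda|_{G_F}$, and take the lift to be that restriction. The paper instead defines $\rho_{\lambda,h}$ by prescribing these traces and determinants and arguing by density over classical weights, which by Chebotarev gives the same representation; you additionally verify near-ordinarity at primes above $p$ and the weight-space matching, which the paper's proof does not spell out. One small correction: a $\Lambda$-subalgebra of a finite flat $\Lambda$-algebra need not be flat (over the two-dimensional ring $\Lambda$, torsion-free does not imply free), so simply take $\Lambda_{\mathbf{h}}:=\Lambda_{\mathbf{f}}\otimes_{\Z_p}\O_{F,\fk{p}}$, which is finite flat.
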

\begin{proof} 
Let us consider a Hida family $\mathbf{f} = (\Lambda_{\mathbf{f}}, \Omega_{\mathbf{f}}, \Omega_{\mathbf{f}}^{\cl}, \mathbf{f}(q))$ of tame level $N$ with $\mathbf{f}= \sum_{n\geq 1} \mathbf{a}_n q^n$ attached to a representation $\rho_{\lambda}:\mathrm{G}_{\mathbb{Q}}\to\mathrm{GL}_2(\mathcal{K})$, where $\mathcal{K}$ is the field of fractions of some finite flat extension of $\Lambda$ determined by the evaluations $\rho_{\lambda}|_{D_{\ell}}(\Fr_\ell)$ at primes $\ell \nmid pN$.

Now, for a prime $\fk{L}$ in $F$ over a rational prime $\ell \nmid pN$, let us define the element $\mathbf{C}_{\fk{L}} = \mathbf{C}_{\fk{L}}(x) \in \Lambda_{\mathbf{f}}$ as
\begin{align*}
        \mathbf{C}_{\fk{L}} = \begin{cases}
            \mathbf{a}_\ell & \text{if } r = 1 \\
            \mathbf{a}_{\ell^r} - \chi(\ell)\ell^{x-1} \mathbf{a}_{\ell^{r-2}} & \text{otherwise},
        \end{cases}
    \end{align*}
where $r = f(\fk{L} \mid \ell)$ is the residual degree of $\fk{L}$ over $\ell$. This element is defined over the set of classical characters and extended to the whole weight space by density.

Notice that $\mathbf{C}_\fk{L} = \Tr(\rho_{\lambda}|_{D_{\fk{L}}}(\Fr_{\fk{L}}))$ since for every classical weight $x$ we have that
$$
\mathbf{C}_{\fk{L}}(x) = \Tr(\rho_{\lambda}|_{D_{\fk{L}}}(\Fr_\fk{L}|_{D_{\fk{L}}}))(x) = \Tr(\rho_{f_x,\fk{p}}|_{D_{\fk{L}}}(\Fr_\fk{L}|_{D_{\fk{L}}})) ,
$$
due to Theorem \ref{thm:hecke eigenvalues of a base-change lift} applied to the weight $x$ specialization of $\mathbf{f}$. The specialization to classical weights is enough because the set of classical weights is dense in the full space of weights. Observe that for each $\fk{p}$ of $F$ over $p \nmid N$ and for each $x \in \Omega_{\mathbf{f}}^{\cl}$, the $p$-adic representation $\rho_{f_x,\fk{p}}$ is fully determined by its evaluation at the Frobenius elements $\Fr_\ell$ for $\ell \nmid pN$. Likewise, $\rho_{f_x,\fk{p}}$ admits a base-change lift $\rho_{h_x,\fk{P}}$ and it is also fully determined by its evaluations at Frobenius elements $\Fr_\fk{L}$ at primes $\fk{L}$ of $F$ over unramified primes $\ell \nmid pN$.

This leads us to define $\rho_{\lambda, h}$ as the unique continuous Galois representation defined, up to conjugation, by setting for each prime $\fk{L} \nmid \fk{pn}$ over $\ell$
$$
\Tr({\rho_{\lambda,h}}|_{D_{\fk{L}}}(\Fr_{\fk{L}})) := \mathbf{C}_{\fk{L}}
$$
and
$$
\det({\rho_{\lambda,h}}|_{D_{\fk{L}}}(\Fr_{\fk{L}})) := \chi(\ell^r)\ell^{r(k-1)}.
$$
By construction, we see that, for classical weights $x$
\begin{align*}
    \mathbf{C}_{\fk{L}}(x) &= \Tr(\rho_{\lambda, h}(\Fr_{\fk{L}}))(x) \\
    &= \Tr(\rho_{h_x,\fk{P}}(\Fr_{\fk{L}})) \\
    &= \Tr(\rho_{f_x,\fk{p}}|_{D_{\fk{L}}}(\Fr_\fk{L}|_{D_{\fk{L}}})) \\
    &= \Tr(\rho_{\lambda}|_{D_{\fk{L}}}(\Fr_\fk{L}|_{D_{\fk{L}}}))(x),
\end{align*}
and a similar equality holds for the determinants. Since the classical weights $x$ form a dense subset, we deduce that
\begin{align*}
    \Tr(\rho_{\lambda, h}|_{D_{\fk{L}}}(\Fr_{\fk{L}})) &=
    \Tr(\rho_{\lambda}|_{D_{\fk{L}}}(\Fr_\fk{L}|_{D_{\fk{L}}}))
\end{align*}
and
\begin{align*}
    \det(\rho_{\lambda, h}|_{D_{\fk{L}}}(\Fr_{\fk{L}})) &=
    \det(\rho_{\lambda}|_{D_{\fk{L}}}(\Fr_\fk{L}|_{D_{\fk{L}}})).
\end{align*}
Hence, up to conjugation, the representations satisfy
$$
\rho_{f_x,\fk{p}}|_{D_{\fk{L}}}(\Fr_\fk{L}|_{D_{\fk{L}}}) = \rho_{\lambda}|_{D_{\fk{L}}}(\Fr_{\fk{L}})(x).
$$
We conclude that $\rho_{\lambda, h}$ is a base-change lift of $\rho_{\lambda}$ to $F$.
\end{proof}

\subsection{An application: potentially diagonalizable automorphic lifts of large weights}

Denote by $\mathcal{O}_{\overline{\mathbb{Q}}_p}$ the closed unit ball in $\overline{\mathbb{Q}}_p$.  The following equivalence relation is introduced in \cite[p.~530]{blggt}: For a local finite extension $F$ of $\Q_p$, let $\rho_1, \,\rho_2: G_F \to \GL_n(\mathcal{O}_{\overline{\Q}_p})$ be two Galois representation. We say that $\rho_1$ \emph{connects to} $\rho_2$ if
\begin{itemize}
\item $\overline{\rho}_1$ and $\overline{\rho}_2$ are equivalent,
\item $\rho_1$ and $\rho_2$ are potentially crystalline,
\item for each continuous field embedding $\tau: F \to \overline{\Q}_p$, $\operatorname{HT}_{\tau}(\rho_1) = \operatorname{HT}_{\tau}(\rho_2)$, namely, the representations have the same set of Hodge--Tate weights,
\item both representations define points on the same irreducible component of the framed universal deformation ring that parametrizes deformations of the common residual representation which are potentially crystalline.
\end{itemize}

\begin{definition}[BLGGT \cite{blggt} p.~531] 
We call a Galois representation $\rho: G_F \to \GL_n(\mathcal{O}_{\overline{\Q}_p})$ \emph{diagonalizable} if it is crystalline and connects to some representation $\chi_1\oplus \dots \oplus \chi_n$, where $\chi_i$ are crystalline characters. The representation $\rho$ is said to be \emph{potentially diagonalizable} if there exists a finite extension $F'/F$ such that $\rho|_{G_{F'}}$ is diagonalizable.
\end{definition}

The following result is proved in \cite[Theorem 2.6]{blancodieulefait1}.
\begin{theorem}
Let $f \in S_k(N,\chi)$ be a cuspidal Hecke eigenform. Fix a prime $p > \max\{k,6\}$ and a prime $\fk{p}$ of $\mathcal{O}_{K_f}$ above $p \nmid N$. Then, there exists a sequence $\{k_r\}_{r\geq 1} \subseteq \mathbb{N}$ such that $\overline{\rho}_{f,\fk{p}}$ admits a potentially diagonalizable modular lift of Hodge--Tate weights $\{0,k_r-1\}$. In fact, if $f$ is $p$-ordinary, the condition $p > \max\{k,6\}$ can be lifted.
\label{podi}
\end{theorem}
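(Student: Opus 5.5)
The plan is to prove Theorem \ref{podi} by moving the question to $p$-adic families. The ordinary case will use Hida families, and the general case will use the Diamond--Taylor/Gee style "change of weight" results of BLGGT. The ordinary case goes first. Since $f$ is $p$-ordinary and $p \nmid N$, the Darmon--Rotger theorem gives a Hida family $\mathbf{f}$ of tame level $N$ passing through $f$ at a classical point $x_k$. For every classical weight $x$, the ordinary $p$-stabilization $f_x^{(p)}$ is congruent to $\mathbf{f}$ modulo the corresponding prime of $\Lambda_{\mathbf{f}}$. Now choose classical points $x_r$ that are $p$-adically close to $x_k$ in the connected component of $\Omega_{\mathbf{f}}$ containing $x_k$. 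A concrete choice is weights $k_r = k + (p-1)p^{r}$, which converge to $k$ in the weight space. Then all the $f_{x_r}$ satisfy $\overline{\rho}_{f_{x_r},\fk{p}} \cong \overline{\rho}_{f,\fk{p}}$. This holds because the residual representation is constant on a connected component, since $\Tr \rho_\lambda(\Fr_\ell)$ reduces to the same element of the residue field of the local ring $\Lambda_{\mathbf{f}}$. Each $\rho_{f_{x_r},\fk{p}}$ is then a modular lift of $\overline{\rho}_{f,\fk{p}}$ with Hodge--Tate weights $\{0,k_r-1\}$.

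Next, I will show that these lifts are potentially diagonalizable. For $p \nmid N$, $\rho_{f_{x_r},\fk{p}}|_{G_{\Q_p}}$ is crystalline. Since $f_{x_r}$ is ordinary, Wiles' theorem on ordinary forms says the local representation is reducible, an extension of an unramified character by $\chi_{\mathrm{cyc}}^{k_r-1}$ times an unramified character. By BLGGT (Lemma 1.4.3 there), ordinary crystalline representations are potentially diagonalizable. The point is that the extension class can be deformed to zero inside the same irreducible component of the crystalline deformation ring, which connects the representation to a sum of crystalline characters. This step requires no hypothesis of the form $p>\max\{k,6\}$, which is why that condition can be dropped in the ordinary case.

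For the non-ordinary case with $p>\max\{k,6\}$, I would use Fontaine--Laffaille theory. Because $2 \le k \le p-1$, the crystalline representation $\rho_{f,\fk{p}}|_{G_{\Q_p}}$ has Hodge--Tate weights $\{0,k-1\}$ lying in the Fontaine--Laffaille range, and BLGGT (Lemma 1.4.3(2)) says such representations are potentially diagonalizable. To produce lifts of larger weights $k_r$, I would use Serre weight considerations. The residual representation $\overline{\rho}_{f,\fk{p}}$ stays the same after twisting appropriately by powers of the cyclotomic character mod $p$, so weights $k_r \equiv k \pmod{p-1}$ are the natural candidates. Existence of modular forms of weight $k_r$ with the same residual representation then follows from the weight-changing part of the theory, via the theta operator or multiplication by the Hasse invariant $E_{p-1}$ together with Deligne--Serre lifting. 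These lifts are crystalline of weight $\{0,k_r-1\}$. To check that they are potentially diagonalizable, I would use the automorphy lifting/"change of weight" theorem of BLGGT (Theorem 4.2.1 / Corollary 4.5.x), which produces a potentially diagonalizable automorphic lift with any prescribed regular weight once a potentially diagonalizable lift exists in some weight and $p>6$ together with adequacy hold. The condition $p>6$ is used at this point.

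The step I expect to be the main obstacle is the last one. The BLGGT change-of-weight theorem needs $\overline{\rho}|_{G_{\Q(\zeta_p)}}$ to be irreducible (adequate), and that is not implied by the bare hypotheses of Theorem \ref{podi}. In the non-ordinary case I would first try to bypass it: Hida-type constancy is unavailable, so instead I would take the lifts of weight $k_r$ directly from Coleman families or congruences, and verify potential diagonalizability locally via Fontaine--Laffaille only for the weights that stay in range after twisting. If that fails, I would have to add the large-image hypothesis, as the main theorem stated later in the introduction does.
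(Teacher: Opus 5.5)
Your ordinary case is the paper's argument, as recounted in its proof of the ordinary base-change generalization. A Hida family passes through $f$, and its classical specializations $f_{k_r}$ have the same residual representation. Each is crystalline at $p$ because $p\nmid N$, and ordinary, hence potentially diagonalizable by BLGGT's Lemma 1.4.3, with no bound on $p$.

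In the non-ordinary case your main route has two steps: Fontaine--Laffaille (using $p>k$) gives a potentially diagonalizable starting point, and then the BLGGT lifting theorem is applied in the new weight. This is in substance what the cited source does: a Khare--Wintenberger deformation ring with prescribed local conditions, then solvable base change and BLGGT's Theorem 4.3.1. The obstacle you name is genuine. That argument needs $\overline{\rho}_{f,\fk{p}}|_{G_{\Q(\zeta_p)}}$ irreducible, which gives adequacy once $p>6$.

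The paper itself indicates that its source assumes this. Its own non-ordinary theorem, whose case $F=\Q$ is attributed to the same reference, assumes large image and $f$ non-CM, and the introduction lists these as conditions of the earlier result. They have been dropped from the statement as transcribed here. So the non-ordinary half is not proved under the bare hypothesis $p>\max\{k,6\}$, either by you or by the paper, which only cites. Your last resort, adding the large-image hypothesis, is the right fix.

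The bypass you sketch would not work.
\begin{itemize}
\item Twisting by a power of the cyclotomic character shifts both Hodge--Tate weights by the same amount, so the spread $k_r-1$ is unchanged. Fontaine--Laffaille therefore applies only for $k_r\le p-1$, and with $k_r\equiv k \pmod{p-1}$ this forces $k_r=k$. Read literally, the constant sequence already satisfies the statement; the real content is an unbounded sequence of weights.
\item Weight-$k_r$ lifts obtained from $E_{p-1}$ plus Deligne--Serre, or from Coleman families, do exist. However, nothing in your sketch controls their potential diagonalizability beyond the Fontaine--Laffaille range. In the BLGGT route they are superfluous anyway, because the theorem constructs its own lift in a chosen local component.
\end{itemize}

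What that theorem does need, and what ``any prescribed regular weight'' glosses over, is a potentially diagonalizable crystalline lift of $\overline{\rho}_{f,\fk{p}}|_{G_{\Q_p}}$ with Hodge--Tate weights $\{0,k_r-1\}$. In the non-ordinary case $\overline{\rho}_{f,\fk{p}}|_{G_{\Q_p}}$ is, up to unramified twist, induced from $\omega_2^{k-1}$. Inducing crystalline characters of $G_{\Q_{p^2}}$ supplies such lifts, for instance for every $k_r\equiv k \pmod{p^2-1}$.
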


We can now prove a generalization of Theorem \ref{podi} in the ordinary case:
\begin{theorem}
Let $f\in S_k(N,\chi)$ be a $p$-ordinary cuspidal Hecke eigenform, $\mathcal{O}_{K_f}$ the ring of integers of its field of definition and $\fk{p}$ a prime of $\mathcal{O}_{K_f}$ above $p \nmid N$. Then, for each totally real number field $F$, there exists a sequence $\{k_r\}_{r\geq 1} \subseteq \mathbb{N}$ such that $\overline{\rho}_{f,\fk{p}}|_{G_F}$ admits a potentially diagonalizable automorphic lift of Hodge--Tate weights $\{0,k_r-1\}$.    
\end{theorem}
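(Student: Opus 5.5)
We use the Hida family through $f$ to produce classical ordinary forms of weights $k_r$ congruent to $f$ modulo $\fk{p}$, and then apply the base-change lift of Hida families (the theorem proved just above) to move everything to $F$.

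\emph{Step 1: the Hida family and its weights.} By the theorem of Darmon--Rotger recalled above, there is a Hida family $\mathbf{f}=(\Lambda_{\mathbf{f}},\Omega_{\mathbf{f}},\Omega_{\mathbf{f}}^{\cl},\mathbf{f}(q))$ of tame level $N$ with a classical point $x_k$ such that $f_{x_k}=f$. Since $\Omega_{\mathbf{f}}^{\cl}$ is $p$-adically dense in the open set $\Omega_{\mathbf{f}}$, we choose a sequence of classical points $x_r\to x_k$. Their weights $k_r=\kappa(x_r)$ satisfy $k_r\equiv k \bmod (p-1)p^{m_r}$ with $m_r\to\infty$. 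All specializations lie in the same residue class of $\Lambda_{\mathbf{f}}$ (a local ring), so after shrinking to a neighbourhood in which $x_r\equiv x_k$ modulo the maximal ideal, the forms $f_{x_r}$ satisfy
\[
\overline{\rho}_{f_{x_r},\fk{p}}\cong\overline{\rho}_{f,\fk{p}}.
\]
Each $f_{x_r}$ is $p$-ordinary, because the $U_p$-eigenvalue $\mathbf{a}_p$ is a unit in $\Lambda_{\mathbf{f}}$. The representations $\rho_{f_{x_r},\fk{p}}$ have Hodge--Tate weights $\{0,k_r-1\}$, and $p\nmid N$ makes them crystalline at $p$.

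\emph{Step 2: base change to $F$.} By the theorem on base-change lifts of Hida families, $\rho_\lambda$ lifts to a family $\rho_{\lambda,h}$ over $F$. Its specialization at $x_r$ is $\rho_{h_{x_r},\fk{P}}\cong\rho_{f_{x_r},\fk{p}}|_{G_F}$, where $h_{x_r}$ is the Hilbert base-change lift of $f_{x_r}$ from Theorem~\ref{thm: existance of base-change lifts to F}. This representation is automorphic of parallel weight $k_r$, and it reduces to $\overline{\rho}_{f,\fk{p}}|_{G_F}$. Its Hodge--Tate weights at every embedding are $\{0,k_r-1\}$, because restricting to $G_{F_v}$ does not change Hodge--Tate weights.

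\emph{Step 3: potential diagonalizability, the main point.} At each place $v\mid p$ of $F$, the representation $\rho_{f_{x_r},\fk{p}}|_{G_{\Q_p}}$ is ordinary, since $f_{x_r}$ is $p$-ordinary. By Wiles' theorem it is therefore upper triangular, with an unramified-twisted crystalline character as quotient and a sub-character of Hodge--Tate weight $k_r-1$. Restriction to $G_{F_v}$ preserves this shape. An ordinary crystalline representation is potentially diagonalizable, by the result in BLGGT \cite{blggt} that ordinary (potentially crystalline) representations are potentially diagonalizable; this is the ingredient behind the ordinary case of Theorem~\ref{podi}. This yields potential diagonalizability at every $v\mid p$.

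The hypotheses $p>\max\{k,6\}$, non-CM and large image are not needed. They were used in \cite{blancodieulefait} to get automorphy of the restriction through modularity lifting, whereas here automorphy comes directly from the base change of the family.

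The obstacle I expect is making Step 1 rigorous. One must justify that infinitely many classical points have the same residual representation as $f$. One must also check that the $p$-stabilized specializations come from newforms of level prime to $p$, so that crystallinity holds. This requires weights $k_r\geq 2$ with $k_r\equiv k \bmod p-1$; points of weight $2$ with nontrivial character at $p$ must be avoided.
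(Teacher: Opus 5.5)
Your proposal is correct. Its skeleton is the paper's: a Hida family through $f$, ordinary crystalline specializations $f_{k_r}$ with the same residual representation (which the paper simply imports from \cite[Theorem 2.6]{blancodieulefait1}), the base-changed family, and \cite[Lemma 1.4.3]{blggt}.

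The genuine difference is how you get ordinarity over $F$. The paper argues on the automorphic side. It combines Theorem~\ref{thm:hecke eigenvalues of a base-change lift} with the congruence \eqref{Heckecoefsord} to show that the eigenvalue $C(\fk{p})$ of $h_{k_r}$ at each $\fk{p}\mid p$ is a $p$-adic unit. So $h_{k_r}$ is nearly ordinary, and therefore $\rho_{h_{k_r},\fk{p}_r}$ is ordinary. You argue purely locally, restricting the upper-triangular shape of $\rho_{f_{k_r},\fk{p}_r}|_{G_{\Q_p}}$ to $G_{F_v}$.

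What each route buys:
\begin{itemize}
\item The paper's route puts the explicit formula to work and shows that the Hilbert forms $h_{k_r}$ are themselves $p$-ordinary, so the lifted family is genuinely nearly ordinary.
\item Its costs are that it needs Wiles' theorem that nearly ordinary Hilbert forms have ordinary Galois representations, and that it is written only for $p$ unramified in $F$. The congruence it quotes should also read $a_{\ell^n}\equiv a_\ell^n \pmod{\ell}$, though the unit conclusion survives.
\item Your route uses nothing about Hilbert forms beyond automorphy, and it is uniform in ramified $p$.
\end{itemize}

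You could shorten it further. \cite[Lemma 1.4.3]{blggt} already makes potential diagonalizability stable under restriction to a finite extension, which is exactly what the paper uses in the non-ordinary theorem. Likewise, Theorem~\ref{thm: existance of base-change lifts to F} applied to each $f_{k_r}$ gives automorphy without the Hida-family base change.

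Your Step~1 worry also resolves itself. Take $k_r\neq k$ with $k_r\equiv k \bmod (p-1)p^{m_r}$ and $m_r\to\infty$; then eventually $k_r>k\geq 2$. An ordinary form of weight at least $3$ with trivial character at $p$ is $p$-old, hence crystalline, and Definition~\ref{hidafamily} builds this in anyway. The dangerous weight-$2$ points are the $p$-new Steinberg ones, not those with nontrivial character at $p$.
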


\begin{proof}
For $F=\mathbb{Q}$ this was proved in \cite[Theorem 2.6]{blancodieulefait1} by considering a Hida family $\mathbf{f}$ passing by $f$ at weight $k$, this means that $\rho_{f_k,\fk{p}} \cong \rho_{f,\fk{p}}$. In particular, there exists a sequence $\{k_r\}_{r\geq 1} \in \mathbb{N}$ such that $\rho_{f_{k_r},\fk{p}_r}$ is a potentially diagonalizable modular lift of $\overline{\rho}_{f,\fk{p}}$ of Hodge--Tate weights $\{0,k_r-1\}$, where $\fk{p}_r$ is a prime of $\mathcal{O}_{K_{f_{k_r}}}$ above $p$. The key idea to establish this fact is to observe that 
\begin{itemize}
    \item $\rho_{f_{k_r},\fk{p}_r}$ is ordinary because $f_{k_r}$ is ordinary, and
    \item $\rho_{f_{k_r},\fk{p}_r}$ is crystalline since $p\nmid N$,
\end{itemize}
and to invoke \cite[Lemma 1.4.3]{blggt}.

Now, for a totally real number field $F$, let us consider the base-change lift $\mathbf{h}$ of $\mathbf{f}$ to $F$, which passes by the base-change lift $h$ of $f$ to $F$ at weight $k$, that is,
$$
\rho_{h_k,\fk{p}} \cong \rho_{f,\fk{p}}|_{G_F}.
$$

Consider the sequence of weights $\{k_r\}_{r\geq 1}$ provided by \cite[Theorem 2.6]{blancodieulefait1}, and the corresponding specializations $f_{k_r}$ of $\mathbf{f}$ at these weights. By construction, the specializations $h_{k_r}$ of $\mathbf{h}$ at these weights are the base-change lifts of the $f_{k_r}$ to $F$, namely:
$$
\rho_{h_{k_r},\fk{p}_r} \cong \rho_{f_{k_r},\fk{p}_r}|_{G_F}.
$$

Now, for any Hecke eigenform $g \in S_k(N,\chi)$ and any prime $\ell \nmid N$, denoting by $a_{\ell^n}$ its $\ell^n$-th Hecke eigenvalue, it is well known that
$$
a_{\ell^n}(g) = a_\ell(g) a_{\ell^{n-1}}(g) - \ell^{k-1} \chi(\ell) a_{\ell^{n-2}}(g),
$$
from which, by induction, it is straightforward to see that 
\begin{equation}
    a_{\ell^n}(g)\equiv a_\ell(g) \mod{\ell}.
    \label{Heckecoefsord}
\end{equation}
Now, $f_{k_r}$ are $p$-ordinary, and hence $a_{p}(f_{k_r})$ is a $p$-adic unit.

But if $p$ is unramified in $F$, then from Equation \eqref{Heckecoefsord} and Theorem \ref{thm:hecke eigenvalues of a base-change lift}, we see that for each prime ideal $\fk{p}$ of $F$ over $p$, the $\fk{p}$-th Hecke eigenvalue of $h_{k_r}$ is also a $p$-adic unit. Hence, $h_{k_r}$ is $\fk{p}$-nearly ordinary and consequently $p$-ordinary. All this implies that $\rho_{h_{k_r},\fk{p}_r}$ is ordinary.

Likewise, since $p \nmid N$, a fortiori $\fk{p} \nmid \fk{n}$ for any $\fk{p}$ above $p$. Therefore, $\rho_{h_{k_r},\fk{p}_r}$ is crystalline. Thus, again by \cite[Lemma 1.4.3]{blggt}, $\rho_{h_{k_r},\fk{p}_r}\cong\rho_{f_{k_r},\fk{p}_r}|_{G_F}$ is potentially diagonalizable.
\end{proof}

As for the non-ordinary case, we need to impose, as in \ref{podi} that $p>\max\{k,6\}$.
\begin{theorem}
Let $f\in S_k(N,\chi)$ be a non-ordinary cuspidal Hecke eigenform, $\mathcal{O}_{K_f}$ the ring of integers of its field of definition and $\fk{p}$ a prime of $\mathcal{O}_{K_f}$ above $p \nmid N$. Assume that $\overline{\rho}_{f,\fk{p}}|_{G_F}$ has large image, $f$ is not a CM form, and $p > \max\{k,6\}$. Then, for each totally real number field $F$, there exists a sequence $\{k_r\}_{r\geq 1} \subseteq \mathbb{N}$ such that $\overline{\rho}_{f,\fk{p}}|_{G_F}$ admits a potentially diagonalizable automorphic lift of Hodge--Tate weights $\{0,k_r-1\}$.  
\end{theorem}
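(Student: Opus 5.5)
The plan is to imitate the structure of the ordinary case, with Hida families replaced by the non-ordinary result of Theorem \ref{podi} together with a potential automorphy / base change step. The case $F=\mathbb{Q}$ is exactly Theorem \ref{podi} under the hypothesis $p>\max\{k,6\}$. There we obtain a sequence $\{k_r\}_{r\geq1}$ and classical eigenforms $g_r\in S_{k_r}(N',\chi')$, with $p\nmid N'$ and a prime $\fk{p}_r\mid p$ of $\O_{K_{g_r}}$. These satisfy $\overline{\rho}_{g_r,\fk{p}_r}\cong\overline{\rho}_{f,\fk{p}}$, and $\rho_{g_r,\fk{p}_r}$ is potentially diagonalizable with Hodge--Tate weights $\{0,k_r-1\}$. (In \cite{blancodieulefait1} these lifts come from congruences in the Fontaine--Laffaille range, which is where $p>\max\{k,6\}$ enters.)

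For general totally real $F$, we apply Theorem \ref{thm: existance of base-change lifts to F} to each $g_r$. This gives a Hilbert modular form $h_r$ over $F$ of parallel weight $k_r$ with $\rho_{h_r,\fk{p}_r}\cong\rho_{g_r,\fk{p}_r}|_{G_F}$. In particular $\rho_{g_r,\fk{p}_r}|_{G_F}$ is automorphic, and its reduction is $\overline{\rho}_{f,\fk{p}}|_{G_F}$. The Hodge--Tate weights at every embedding $\tau:F_v\to\overline{\Q}_p$, $v\mid p$, are still $\{0,k_r-1\}$, since restriction to an open subgroup preserves them. We can also use Theorem \ref{thm:hecke eigenvalues of a base-change lift} to describe the Hecke eigenvalues of $h_r$ explicitly, though this is not needed for the argument.

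It remains to check that potential diagonalizability survives restriction from $G_{\Q_p}$ to $G_{F_v}$. This is formal. If $\rho_{g_r,\fk{p}_r}|_{G_{K}}$ is diagonalizable for some finite $K/\Q_p$, take the compositum $K'=KF_v$. Diagonalizability is preserved under restriction to a finite extension, because crystallinity and the connectedness relation of \cite{blggt} are stable under restriction (\cite[Lemma 1.4.1]{blggt}). Hence $\rho_{g_r,\fk{p}_r}|_{G_{K'}}$ is diagonalizable, so $\rho_{g_r,\fk{p}_r}|_{G_{F_v}}$ is potentially diagonalizable for every $v\mid p$. Therefore $\rho_{h_r,\fk{p}_r}$ is the required lift.

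The main obstacle is in the use of the hypotheses on large image and non-CM. Theorem \ref{podi} as stated does not require them, but the automorphy of the lifts over $F$ should be kept robust: the \cite{blggt} automorphy lifting theorems need adequacy of $\overline{\rho}|_{G_{F(\zeta_p)}}$, which follows from large image and $p>6$. I would first try to avoid this step entirely by relying only on the base change of Theorem \ref{thm: existance of base-change lifts to F} applied to the specific $g_r$, so that the extra hypotheses serve only to guarantee that Theorem \ref{podi} applies in the non-ordinary setting. If the $g_r$ from \cite{blancodieulefait1} are only known to exist as potentially diagonalizable lifts, not as modular forms, I would instead invoke \cite[Theorem 4.2.1]{blggt}. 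Its adequacy hypothesis is supplied by the large-image and non-CM assumptions, and it gives directly that such lifts over $F$ with the prescribed weights $\{0,k_r-1\}$ are automorphic.
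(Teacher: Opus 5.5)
Your proposal is correct and is essentially the paper's argument. You take the potentially diagonalizable modular lifts of $\overline{\rho}_{f,\fk{p}}$ with Hodge--Tate weights $\{0,k_r-1\}$ over $\Q$ from \cite{blancodieulefait1}, restrict them to $G_F$, use that potential diagonalizability and Hodge--Tate weights survive restriction, and get automorphy from Theorem \ref{thm: existance of base-change lifts to F}. The paper cites \cite[Theorem 0.1]{blancodieulefait1} for that input; that result already produces modular forms, so your fallback via automorphy lifting in \cite{blggt} over $F$ is not needed.
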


\begin{proof} Again, the case $F=\mathbb{Q}$ was proved in \cite[Theorem 0.1]{blancodieulefait1}. They used a method due to Khare and Winterberger which allows to produce a global deformation ring parametrizing deformations of $\overline{\rho}_{f,\fk{p}}$ with the desired local properties, except for the fact that these deformations are not necessarily automorphic. So, using a solvable base change and \cite[Theorem 4.3.1]{blggt} one obtains the desired potentially diagonalizable modular deformation of Hodge--Tate weights $\{0,k_r-1\}$ for $k_r$ in an infinite family of integers.

Consider the Galois representation $\rho_{f_r,\fk{p}_r}$ residually equivalent to $\overline{\rho}_{f,\fk{p}}$, where $f_r$ has weight $k_r$ and $\fk{p}_r$ is a prime ideal of $\mathcal{O}_{K_{f_r}}$ above $p$. The condition of being potentially diagonalizable is compatible with base change, so $\rho_{f_r,\fk{p}_r}|_{G_F}$ is potentially diagonalizable. Also, by \ref{thm: existance of base-change lifts to F} $\rho_{f_r,\fk{p}_r}|_{G_F}$ is automorphic, hence attached to a Hilbert cusp form of parallel weight $k_r$ so that $\rho_{f_r,\fk{p}_r}|_{G_F}$ has Hodge--Tate weights $\{0,k_r-1\}$.
\end{proof}

Our motivation to define the base change of Hida families as well as the non-ordinary potentially diagonalizable families of modular Galois representations is to tackle the Langlands base change of tensor products and symmetric powers of modular automorphic representations. In \cite{newton2021symmetric}, Hida families have been used to establish the automorphy of symmetric powers of automorphic representations attached to $\mathrm{GL}_2/\mathbb{Q}$, hence we expect that the base change of Hida families can be exploited to establish the automorphic base change of symmetric powers to $\mathrm{GL}_2/F$ for totally real number fields $F$. 

On the other hand, \cite{arias2016automorphy} uses potentially diagonalizable families of variable weights in conjunction with safe chains of modular Galois representations to prove that the tensor product of a significant family of automorphic representations attached to $\mathrm{GL}_2/\mathbb{Q}$ stays automorphic. Hence, we plan to combine base change with these results to establish the automorphic base change of tensor products to $\mathrm{GL}_2/F$. We will address these two problems in future research.

\section{Implementation in Magma}
\label{sec: implementation in Magma}
This section provides the pseudocode for implementing the formulas from Theorem \ref{thm:hecke eigenvalues of a base-change lift} in Magma \cite{Magma1484478}. Furthermore, we use this formula to prune the search for a Hilbert modular form lifting some classical newform $f$. The Magma code implementations of the introduced algorithms can be found in Appendix \ref{appendix: Magma implementations}.

\subsection{Hecke Eigenvalues of a base-change lift}
We have implemented an algorithm in Magma that computes the Hecke eigenvalues of a base-change lift of a newform $f \in S_k(\Gamma_1(N), \chi)$ to a totally real number field $F$. The pseudocode is given below in Algorithm \ref{alg:HeckeEigenvalueOfBasechangeLift}.

\begin{algorithm}
\caption{Hecke eigenvalue of base-change lift}
\label{alg:HeckeEigenvalueOfBasechangeLift}
\SetKw{eq}{eq}
\SetKwInOut{Input}{Input}
\SetKwInOut{Output}{Output}
\Input{A newform $f \in S_k(\Gamma_1(N), \chi)^{\operatorname{new}}$ with Fourier coefficients $[a(n)]_{n = 1}^\infty$, \\
a prime ideal $\fk{p} \subset \O_F$}
\Output{$C(\fk p) =$ Hecke Eigenvalue at $\fk{p}$ of the base-change lift $f$ to $F$}
\BlankLine
$k := $ Weight($f$)\;
$p := \fk{p} \cap \Z$, the rational prime below $\fk{p}$\;
$r := f(\fk{p} \mid p)$, residual degree\;
\eIf{$r$ \eq $1$}{
    $C(\fk{p}) := a(p)$\;
}
{
    $C(\fk{p}) := a(p^{r}) - \chi(p) p^{k-1} a(p^{r-2})$\;
}
\Return{$C(\fk{p})$\;}
\end{algorithm}

\subsection{Computing the base-change lift}
We can use Algorithm \ref{alg:HeckeEigenvalueOfBasechangeLift} to try to pinpoint the lifted Hilbert modular form that lies above some newform $f \in S_k(\Gamma_1(N), \chi)$. However, due to the current limitations of the Magma package for Hilbert modular forms, the Magma implementation can only compute the space of Hilbert newforms for trivial nebentypus $\chi = 1$. Since it possible for the base-change lift $h$ to lose some of the ramification of the newform $f$, the level $\fk n$ where $h$ is new could be a priori any divisor of the ideal $N \O_F$. Hence, the following algorithm loops through all possible levels, that is, ideals dividing $N \O_F$, and computes the new subspace for that level. The pseudo-code for finding the base-change lift is the following.

\begin{algorithm}[H]
\caption{Base-change lift to $F$}
\label{alg:BasechangeLiftToF}
\SetKw{eq}{eq}
\SetKw{noteq}{neq}
\SetKwInOut{Input}{Input}
\SetKwInOut{Output}{Output}
\Input{A newform $f \in S_k(\Gamma_1(N), \chi)^{\operatorname{new}}$, \\
a totally real number field $F$, \\
NormUpperbound giving the bound on the norm of the primes}
\Output{A base-change lift $h$ of $f$ to $F$}
\BlankLine
$N := $ Level($f$)\; 
$D_F := $ Discriminant($F$)\;
possibleLevels $:=$ $\{\fk{a} \subset \O_F : \fk{a} \mid (N \O_F)\}$\;
goodPrimes $:= \{\fk{p} \subset \O_F : \fk{p} \nmid (D_FN)\O_F \text{ prime}, \ \Norm_{F / \Q}(\fk{p}) \leq \text{NormUpperbound}\}$\;

possibleLifts := EmptyList\;

\For{level in possibleLevels}
{
    eigenforms $:=$ HilbertNewforms($F$, level, $k$)\;
    \For{$h$ in eigenforms}
    {
        flag $:=$ true\;
        \For{$\fk{p}$ in goodPrimes}
        {
            $C(\fk{p}) := $ HeckeEigenvalueOfBasechangeLift($f$, $\fk{p}$)\;
            $A(\fk{p}) := $ HeckeEigenvalue($h$, $\fk{p}$)\;
            \If{$A(\fk p)$ \noteq $C(\fk {p})$}
            {
                flag $:=$ false\;
                break\;
            }
        }
        \If{flag}
        {
            Append(possibleLifts, h)\;
        }
    }
}

\eIf{\#possibleLifts \eq $1$}
{
    \Return{possibleLifts[1]}\;
} 
{
    \Return{Error: Found more than one possible lift.}
}
\end{algorithm}

\subsection{Examples}

\begin{example}
The example \href{https://magma.maths.usyd.edu.au/magma/handbook/text/1789}{ModFrmHil\_eigenform-examples (H149E6)} in the Magma documentation computes the Hecke eigenvalues of the one-dimensional piece of the newforms coming from $F = \Q(\sqrt{2})$ and level $\fk{n} = 11 \O_F$. However, the example compares the Hecke eigenvalues of $h$ with those of an classical cusp form $f \in S_2(\Gamma_0(11))$ only at split primes $\fk{p} \in \Spec(\O_F)$. We can now complete the example at the inert primes 3 and 5. 
\begin{verbatim}
_<x> := PolynomialRing(IntegerRing());
F := NumberField(x^2-2);  
M := HilbertCuspForms(F, 11*Integers(F));
decomp := NewformDecomposition(NewSubspace(M)); 

h := Eigenform(decomp[1]);  
f := Newforms(CuspForms(11))[1][1];

primes := [P : P in PrimesUpTo(50,F) | InertiaDegree(P) eq 2];
for P in primes do
    Cp := HeckeEigenvalueOfBasechangeLift(f, P);
    Norm(P), HeckeEigenvalue(h,P), Cp;
end for;

> 9  -5 -5
> 25 -9 -9
\end{verbatim}
\end{example}

\begin{example}
Let $F = \Q(\zeta_7)^+ = \Q(\zeta_7) \cap \R$ be the maximal totally real subextension of the 7-th cyclotomic field $\Q(\zeta_7)$. Then $F$ is a cubic abelian totally real number field. Consider the newform $f \in S_2(\Gamma_0(N))$, where $N = 147 = 3 \cdot 7^2$ with LMFDB label \href{https://www.lmfdb.org/ModularForm/GL2/Q/holomorphic/147/2/a/c/}{147.2.a.c}. By computing the Hecke eigenvalues of a lift  of $f$ to $F$, we see that they agree with the Hilbert modular form $h$ of prime level $\fk{n} = 3 \O_F$ with the LMFDB label \href{https://www.lmfdb.org/ModularForm/GL2/TotallyReal/3.3.49.1/holomorphic/3.3.49.1-27.1-a}{3.3.49.1-27.1-a}. We note that we have lost the ramifying prime $p = 7$ in the level of the lift.

\begin{verbatim}
_<zeta> := CyclotomicField(7);
F := NumberField(MinimalPolynomial(zeta + 1/zeta));

H := HilbertCuspForms(F, 3*Integers(F));
Hdecomposed := NewformDecomposition(NewSubspace(H));
h := Eigenform(Hdecomposed[1]);
f := Newforms(CuspForms(147))[3][1];

primes := [P : P in PrimesUpTo(50,F) 
| (AbsoluteDiscriminant(F) mod Norm(P)) ne 0];
for P in primes do
    Cp := HeckeEigenvalueOfBasechangeLift(f, P);
    Norm(P), HeckeEigenvalue(h,P), Cp;
end for;

> 8 -4 -4
> 13 1 1
> 27 1 1
> 29 4 4
> 41 -10 -10
> 43 5 5
\end{verbatim}
\end{example}

\begin{example}
As an example of a prime $p$ with a high inertia degree, we consider the totally real field $F = \Q(\zeta_{11})^+$, the classical modular form $f \in S_2(\Gamma_0(11))^{\operatorname{new}}$ with the LMFDB label \href{https://www.lmfdb.org/ModularForm/GL2/Q/holomorphic/11/2/a/a/}{11.2.a.a}. The Hilbert modular form $h$ with the LMFDB label \href{https://www.lmfdb.org/ModularForm/GL2/TotallyReal/5.5.14641.1/holomorphic/5.5.14641.1-11.1-a}{5.5.14641.1-11.1-a} is the lift of $f$ to $\mathbb{Q}(\zeta_{11})^+$. The level $\fk{n}$ of $h$ is the unique prime above the totally ramified prime 11. The prime 2 stays inert in $F$, so at $\mathfrak{p} = 2\mathcal{O}_F$, we have residual degree $r = [F : \mathbb{Q}] = 5$, and our formula computes the following Hecke eigenvalue $C(\fk{p})$ for the lift of $f$ to $F$.
\begin{verbatim}
_<zeta> := CyclotomicField(11);
F := NumberField(MinimalPolynomial(zeta + 1/zeta));
OF := Integers(F);

level := Factorisation(11*OF)[1][1];
H := HilbertCuspForms(F, level);
Hdecomposed := NewformDecomposition(NewSubspace(H));
h := Eigenform(Hdecomposed[1]);
f := Newforms(CuspForms(11))[1][1];

P := 2*OF;
Cp := HeckeEigenvalueOfBasechangeLift(f, P);
"r =", InertiaDegree(P);
HeckeEigenvalue(h, P), Cp;

> r = 5
> 8 8
\end{verbatim}
\end{example}

\section*{Statements and declarations}
I. Blanco-Chacón and L. Dieulefait have been supported by PID2022-136944NB-I00 (Spanish Ministry of Science and Innovation). A. Haavikko has been supported by Wihuri foundation (grant \#00240063).

\printbibliography 

\appendix

\section{Magma implementations}
\label{appendix: Magma implementations}
\subsection{Algorithm \ref{alg:HeckeEigenvalueOfBasechangeLift}}
\begin{verbatim}
--- 
HeckeEigenvalueOfBasechangeLift(f,P)

**Inputs:**
- f, a newform of weight k and level N for the congruence subgroup Gamma0(N)
- P, a prime ideal of O_F not dividing the level N, where
  F is totally real number field into which we base-change

**Output:** 
- Hecke eigenvalue at P above p of the Hilbert modular form h
  which is a base-change lift of f to F.

HeckeEigenvalueOfBasechangeLift := function(f, P)
    r := InertiaDegree(P);
    _, p := IsPrincipal(P meet IntegerRing());
    k := Weight(f);
    chi := DirichletCharacter(f);
    if r eq 1 then
            Cp := Coefficient(f, p);
    else
            Cp := Coefficient(f, p^r) - chi(p)*p^(k-1)*Coefficient(f, p^(r - 2));
    end if;
    return Cp;
end function;
---
\end{verbatim}

\subsection{Algorithm \ref{alg:BasechangeLiftToF}}
We have written several auxiliary functions that simplify and improve the readability of the implementation of Algorithm \ref{alg:BasechangeLiftToF}.
\begin{verbatim}
---
PossibleLevelsForBasechangeLift(f, F)

**Inputs**
- f, a Hecke newform of weight k and level N
  for the congruence subgroup Gamma0(N)
- F, totally real number field into which we base change

**Output** 
- List of possible levels for the base-change HMF h of f to F.

PossibleLevelsForBasechangeLift := function(f, F);
    // Loop through all divisors of N * OF counting with multiplicity
    N := Level(f);
    OF := Integers(F);

    prime_multiset := [* *];
    for x in Factorisation(N*OF) do
        for i in [1..x[2]] do
            Append(~prime_multiset, x[1]);
        end for;
    end for;

    subset_indices := Subsets(Set([1..#prime_multiset]));
    possible_levels := { };
    for index in subset_indices do
        a := 1*OF;
        for j in index do
            a := a*prime_multiset[j];
        end for;
        Include(~possible_levels, a);
    end for;

    return [level : level in possible_levels];
end function;
---


---
 GoodPrimes(f, F, NormUpperbound)
 
**Inputs**
- f, a Hecke newform of weight k and level N
  for the congruence subgroup Gamma0(N)
- F, totally real number field into which we base change

**Output** 
- List of primes P of O_F
  that are above unramified primes p that do not divide N

GoodPrimes := function(f, F, NormUpperbound)
    N := Level(f);
    D := AbsoluteDiscriminant(F);
    good_primes := [* *];

    for P in PrimesUpTo(NormUpperbound, F) do
        _, p := IsPrincipal(P meet IntegerRing());
        if N*D mod p ne 0 then
            Append(~good_primes, P);
        end if;
    end for;

    return good_primes;
end function;
---


---
IsBasechangeLift(h, f, primes, good_primes, embedding, automorphisms_Kf)

**Inputs**
- h, a Hilbert newform of parallel weight k and level dividing N * O_F
  for a totally real field F.
- f, a Hecke newform of weight k and level N for the congruence 
  subgroup Gamma0(N)
- good_primes, a list of prime ideals of F above unramified primes 
  that do not divide N
- embedding, any embedding of the Hecke eigenvalue field K_f of f into that of h
- automorphisms_Kf, field automorphisms of the Hecke eigenvalue field K_f

**Output** 
- Boolean true/false based on whether h has the Hecke eigenvalues 
  of a basechange lift of f to F at all P in good_primes
  
IsBasechangeLift := function(h, f, good_primes, embedding, automorphisms_Kf)
    assert forall{kh : kh in Weight(Parent(h)) | kh eq Weight(f)};

    flag := true;
    for P in good_primes do
        Cp := HeckeEigenvalueOfBasechangeLift(f, P);
        Ap := HeckeEigenvalue(h, P);
        Cp_embeddings_to_Kh := [embedding(map(Cp)) : map in automorphisms_Kf];
        if Ap notin Cp_embeddings_to_Kh then
            flag := false;
            break;
        end if;
    end for;

    return flag;
end function;
---


---
BasechangeLift(f, F, NormUpperbound, ReturnFirstPossibleLift)

**Inputs**
- f, a Hecke newform of weight k and level N for 
  the congruence subgroup Gamma0(N) 
- F, totally real number field into which we base change

**Output** 
- List of potential Hilbert modular forms h that share the first 
  Hecke eigenvalues with the base-change lift of f to F.

BasechangeLift := function(f, F : NormUpperbound := 100, ReturnFirstPossibleLift := false)
    assert IsTotallyReal(F);
    assert IsTrivial(DirichletCharacter(f)); 
    // Magma currently only supports HMFs with trivial DirichletCharacter

    k := Weight(f);
    n := Degree(F);
    Kf := CoefficientField(f);
    automorphisms_Kf := Automorphisms(Kf); 
    
    possible_levels := PossibleLevelsForBasechangeLift(f, F);
    "Found", #possible_levels, "possible levels for the lift";

    good_primes := GoodPrimes(f, F, NormUpperbound);
    "Testing the first", #good_primes, "primes";

    possible_lifts := [* *];

    for level in possible_levels do
        H := HilbertCuspForms(F, level, [k : j in [1..n]]);
        Hnew := NewformDecomposition(NewSubspace(H));
        delete H;
        for decomp in Hnew do
            Kh := HeckeEigenvalueField(decomp);
            if Kh is RationalField() or IsIsomorphic(Kh, RationalsAsNumberField()) then
                Kh := RationalsAsNumberField();
            end if;

            if IsSubfield(Kf, Kh) then
                _, emb := IsSubfield(Kf, Kh);
                h := Eigenform(decomp);
                if IsBasechangeLift(h, f, good_primes, emb, automorphisms_Kf) then
                    Append(~possible_lifts, h);
                    if ReturnFirstPossibleLift then
                        return possible_lifts;
                    end if;
                end if;
            end if;
        end for;
    end for;

    if #possible_lifts eq 1 then
        return possible_lifts;
    else
        "Found", #possible_lifts, "possible lifts.";
        "Try increasing the NormUpperbound.";
        return possible_lifts;
    end if;
end function;
---
\end{verbatim}

\end{document}